\theoremstyle{plain} 
\newtheorem{thmx}{Theorem}
\newtheorem{thm}{Theorem}[section]
\newtheorem{lemma}[thm]{Lemma}
\newtheorem{prop}[thm]{Proposition}
\newtheorem{exmp}[thm]{Example}
\theoremstyle{definition} 
\newtheorem{defn}[thm]{Definition}
\theoremstyle{remark} 
\newtheorem{rmk}[thm]{Remark}
\numberwithin{equation}{section}
\DeclareMathOperator{\linspan}{span} 
\DeclareMathOperator{\Iso}{Iso} 
\DeclareMathOperator{\supp}{supp} 
\newcommand{\A}{\mathcal{A}}		
\newcommand{\C}{\mathbb{C}}			
\newcommand{\Z}{\mathbb{Z}}	
\newcommand{\N}{\mathbb{N}}			
\newcommand{\T}{\mathbb{T}}			
\newcommand{\G}{\mathcal{G}}		    
\newcommand{\Go}{\mathcal{G}^{(0)}}		
\def\Isoint#1{\Iso (#1)^\circ}
\title[$C^*$-uniqueness for Groupoids]{$C^*$-uniqueness Results for Groupoids}
\author{Are Austad and Eduard Ortega}
\address{Norwegian University of Science and Technology, Department of Mathematical Sciences, Trondheim, Norway.}
\email{are.austad@ntnu.no, eduard.ortega@ntnu.no}
\begin{document}
	\maketitle

	\begin{abstract}
		For a second-countable locally compact Hausdorff \'etale groupoid $\G$ with a continuous $2$-cocycle $\sigma$ we find conditions that guarantee that $\ell^1 (\G,\sigma)$ has a unique $C^*$-norm.
	\end{abstract}
	
	\section{Introduction}
	Given a reduced (Banach) $*$-algebra $\A$, the enveloping $C^*$-algebra $C^* (\A)$ plays a fundamental role in the representation theory of $\A$. However, any faithful $*$-representation of $\A$ will yield a $C^*$-completion of $\A$, and one may ask if this completion is isomorphic to the enveloping $C^*$-algebra. In the particular case of a locally compact group $G$, we may for example consider the $*$-algebras $C_c (G)$ or $L^1 (G)$. There are then two canonical $C^*$-norms, namely the one arising from the left regular representation and the maximal $C^*$-norm. It is well-known that $G$ is an amenable group if and only if these two $C^*$-norms coincide. However, even for amenable groups we can not rule out that there are $C^*$-norms on $C_c (G)$ and $L^1 (G)$ that are properly dominated by the norm induced by the left regular representation. Examples of this are given in \cite[p.\ 230]{Boidol84}. This invites the notion of \emph{$C^*$-uniqueness}. A reduced $*$-algebra $\A$ is called $C^*$-unique if $C^* (\A)$ is the unique $C^*$-completion of $\A$ up to isomorphism. This was extensively studied in \cite{Barnes83} for $*$-algebras. Moreover, a more specialized study for convolution algebras of locally compact groups was conducted in \cite{Boidol84}, where $C^*$-uniqueness of $L^1 (G)$ was studied by considering properties of the underlying group $G$. These two papers spawned investigations on $C^*$-uniqueness in the following decades, see for example \cite{HaKuVo90, LeuNg, DeKa13, Austad20}. In later years, algebraic $C^*$-uniqueness of discrete groups has garnered some attention \cite{GrMuRo, AlKy19, Scarparo}. This is the study of $C^*$-uniqueness of the group ring $\C [\Gamma]$ for a discrete group $\Gamma$ and is not equivalent to the study of $C^*$-uniqueness of $\ell^1 (\Gamma)$, see \cref{rmk:C*-uniqueness-ell1-vs-group-ring}. 
	
	We will in this paper study the $C^*$-uniqueness of certain Banach $*$-algebras associated to groupoids. To be more precise, given a second-countable locally compact Hausdorff \'etale groupoid $\G$ with a normalized continuous $2$-cocycle $\sigma$, we will study the $C^*$-uniqueness of the $I$-norm completion of $C_c (\G,\sigma)$, which will be denoted by $\ell^1 (\G,\sigma)$, see \eqref{eq:I-norm}. Here $C_c (\G,\sigma)$ denotes the space $C_c (\G)$ equipped with $\sigma$-twisted convolution and involution, see \eqref{eq:twisted-conv} and \eqref{eq:twisted-inv}, and similarly for $\ell^1 (\G, \sigma)$. Associated to $\ell^1 (\G,\sigma)$ are two canonical $C^*$-norms, namely the one coming from the $\sigma$-twisted left regular representation, see \eqref{eq:2-twisted-left-reg-rep}, and the full $C^*$-norm. If these coincide we say $\G$ has the $\sigma$-weak containment property. The technicalities will be postponed to \cref{sec:prelim-groupoids}. Letting $\Isoint{\G}$ denote the interior of the isotropy subgroupoid of $\G$, we will first find that for $\ell^1 (\G,\sigma)$ to be $C^*$-unique, it is sufficient that $\ell^1 (\Isoint{\G},\sigma)$ is $C^*$-unique. If we further let $\Isoint{\G}_x$ denote the fiber of $\Isoint{G}$ in the point $x \in \Go$, and let $\sigma_x$ denote the restriction of $\sigma$ to this fiber, we have the following main result. 
	
	\begin{thmx}[cf.\ \cref{thm:main-thm}]\label{thm:intro1}
		Let $\G$ be a second-countable locally compact Hausdorff \'etale groupoid with a continuous $2$-cocycle $\sigma$. Suppose that $\G$ has the $\sigma$-weak containment property. Then
		$\ell^1 (\G,\sigma)$ is $C^*$-unique if all the twisted convolution algebras $\ell^1 (\Isoint{\G}_x,\sigma_x)$, $x \in \Go$, are $C^*$-unique. 	
	\end{thmx}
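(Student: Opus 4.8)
The plan is to combine the reduction to the interior isotropy established earlier with a fibrewise analysis of the resulting group bundle. By that reduction it suffices to prove that $\ell^1 (\Isoint{\G}, \sigma)$ is $C^*$-unique. The role of the $\sigma$-weak containment hypothesis is to guarantee that after passing to the isotropy one may work with the enveloping (full) twisted $C^*$-algebra $C^* (\Isoint{\G}, \sigma)$ and its full fibres, so that the fibrewise hypothesis — $C^*$-uniqueness of each $\ell^1 (\Isoint{\G}_x, \sigma_x)$, a statement about the enveloping $C^*$-norm — is exactly the input available. Throughout I will use the ideal-theoretic reformulation of $C^*$-uniqueness: a reduced Banach $*$-algebra $\A$ is $C^*$-unique if and only if every nonzero closed two-sided ideal $\mathcal I \trianglelefteq C^* (\A)$ satisfies $\mathcal I \cap \A \neq \{0\}$. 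Indeed, $C^*$-completions of $\A$ correspond to closed ideals $\mathcal I$ with $\mathcal I \cap \A = \{0\}$, the enveloping completion being $\mathcal I = \{0\}$.

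The key structural input is that $\Isoint{\G}$ is a second-countable étale \emph{group bundle} over $\Go$: since every arrow lies in the isotropy we have $r = s$ on $\Isoint{\G}$, and each fibre $\Isoint{\G}_x$ is a discrete group. Consequently $C_0 (\Go)$ embeds as a central subalgebra of $C^* (\Isoint{\G}, \sigma)$, which therefore is a $C_0 (\Go)$-algebra with fibre $C^* (\Isoint{\G}_x, \sigma_x)$ over $x \in \Go$ and $\|c\| = \sup_{x \in \Go} \|c_x\|$. Because $C_0 (\Go)$ is central, every closed two-sided ideal $\mathcal I$ is automatically a $C_0 (\Go)$-submodule, hence is itself a $C_0 (\Go)$-algebra cut out by an upper semicontinuous field of closed ideals $\mathcal I_x \trianglelefteq C^* (\Isoint{\G}_x, \sigma_x)$.

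Now suppose $\mathcal I \neq \{0\}$. Since the bundle norm is the supremum of the fibre norms, some fibre $\mathcal I_{x_0}$ is nonzero, and applying the ideal reformulation to the hypothesis that $\ell^1 (\Isoint{\G}_{x_0}, \sigma_{x_0})$ is $C^*$-unique produces a nonzero $b \in \mathcal I_{x_0} \cap \ell^1 (\Isoint{\G}_{x_0}, \sigma_{x_0})$. It remains to promote this fibrewise element to a single nonzero element of $\mathcal I \cap \ell^1 (\Isoint{\G}, \sigma)$; by the reformulation this finishes the argument.

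This last lifting step is the main obstacle. An element of $\mathcal I$ restricting to $b$ at $x_0$ lives a priori only in $C^* (\Isoint{\G}, \sigma)$, multiplying an $\ell^1$-section by a general element of $\mathcal I$ destroys $\ell^1$-membership, and an $\ell^1$-extension of $b$ need not lie in $\mathcal I$ away from $x_0$. I would resolve it by showing that $\mathcal I \cap \ell^1 (\Isoint{\G}, \sigma)$ is dense in $\mathcal I$. Because $\Isoint{\G}$ is étale one can extend finitely supported fibre elements through local bisections to sections in $C_c (\Isoint{\G}, \sigma) \subseteq \ell^1 (\Isoint{\G}, \sigma)$; the fibrewise $C^*$-uniqueness supplies, near each $x$, such $\ell^1$-sections whose value at $x$ approximates a prescribed $c_x \in \mathcal I_x$; upper semicontinuity of the field $\{\mathcal I_x\}$ keeps these sections within $\varepsilon$ of $\mathcal I$ on a neighbourhood of $x$; and since $\Go$ is second-countable locally compact Hausdorff, a $C_0 (\Go)$-partition of unity patches them into global $\ell^1$-sections approximating $c$, whence closedness of $\mathcal I$ places the limit, and in particular nonzero elements, in $\mathcal I \cap \ell^1 (\Isoint{\G}, \sigma)$. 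The delicate point is controlling the variation of the fibre ideals $\mathcal I_x$ so that the patched $\ell^1$-sections genuinely return to $\mathcal I$ in the limit rather than merely approximately; this is where the étale, Hausdorff, and second-countability hypotheses, together with the regularity of the $C_0 (\Go)$-algebra $C^* (\Isoint{\G}, \sigma)$, are essential, and I expect the technical heart of the proof to reside here.
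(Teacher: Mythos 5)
Your reduction to $C^*$-uniqueness of $\ell^1 (\Isoint{\G},\sigma)$ is exactly the paper's \cref{prop:twisted-Iso-C*unique-gives-G-C*unique} (note, though, that $\sigma$-weak containment is used there through the ideal-detection theorem \cref{prop:twisted-injective-iff-injective-on-subalg}, which is a statement about \emph{reduced} algebras, not merely to ``make the full algebra available''), and your bundle picture --- $C_0 (\Go)$ central, Nilsen's theorem, fibres of completions related to $\ell^1 (\Isoint{\G}_x,\sigma_x)$ --- matches \cref{lemma:Cpi-is-C0-alg}, \cref{lemma:Banach-alg-fiber-iso} and \cref{prop:nilsen-result}. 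The genuine gap is the step you yourself flag: promoting a nonzero $b \in \mathcal{I}_{x_0} \cap \ell^1 (\Isoint{\G}_{x_0},\sigma_{x_0})$ to a nonzero element of $\mathcal{I} \cap \ell^1 (\Isoint{\G},\sigma)$, which you propose to do by proving $\mathcal{I} \cap \ell^1 (\Isoint{\G},\sigma)$ dense in $\mathcal{I}$ via partition-of-unity patching. As sketched this fails for two reasons. First, your patched approximants converge only in the $C^*$-norm, so the limit lies in $\mathcal{I}$ but there is no control whatsoever keeping it in $\ell^1$ (the $I$-norm is strictly stronger than the $C^*$-norm); conversely, every element you can keep in $\ell^1$ is only \emph{approximately} in $\mathcal{I}$. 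The scheme therefore never exhibits a single element of the intersection. Second, upper semicontinuity of the ideal field $x \mapsto \mathcal{I}_x$ points the wrong way: to keep an $\ell^1$-section through $b$ close to $\mathcal{I}$ on a neighbourhood of $x_0$ you would need the nearby fibres $\mathcal{I}_y$ not to shrink as $y \to x_0$, a lower-semicontinuity property that is simply not available --- nearby fibres of $\mathcal{I}$ can be $\{0\}$ even when $\mathcal{I}_{x_0} \neq \{0\}$.

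The paper avoids the lifting problem entirely by applying the fibrewise hypothesis to \emph{representations} instead of to \emph{ideals}. Given an arbitrary faithful $*$-representation $\pi$ of $\ell^1 (\Isoint{\G},\sigma)$, it realizes the completion $C^*_\pi (\Isoint{\G},\sigma)$ as $\Gamma_0 (\mathbf{B}^\pi)$ via \cref{prop:nilsen-result}, and shows --- using the isometric identification $\ell^1 (\Isoint{\G},\sigma)/I_x \cong \ell^1 (\Isoint{\G}_x,\sigma_x)$ of \cref{lemma:Banach-alg-fiber-iso} together with a Cohen--Hewitt factorization argument showing sections in $I^\pi_x$ vanish at $x$ --- that there is an injective continuous $*$-homomorphism with dense range $\Psi_x \colon \ell^1 (\Isoint{\G}_x,\sigma_x) \to B^\pi_x$. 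Thus every fibre of every completion is itself a $C^*$-completion of the fibre $\ell^1$-algebra, and $C^*$-uniqueness of $\ell^1 (\Isoint{\G}_x,\sigma_x)$ forces $B^\pi_x \cong B^{\mathrm{full}}_x$ for all $x$; comparing the two section algebras then gives $C^*_\pi (\Isoint{\G},\sigma) \cong C^* (\Isoint{\G},\sigma)$. (Equivalently: the canonical $C_0 (\Go)$-linear surjection from the enveloping algebra onto $C^*_\pi$ is fibrewise injective, hence injective.) Only upper semicontinuity and fibrewise data are needed, no element is ever lifted back into $\ell^1$, and the intersection property you were aiming at holds a posteriori by \cref{prop:C*-uniqueness-intersection-property}. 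If you want to salvage your route, this reversal --- quantifying over completions rather than over ideals --- is the missing idea.
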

	
	The theorem allows us to deduce $C^*$-uniqueness of $\ell^1 (\G,\sigma)$ by considering $C^*$-uniqueness of the (twisted) convolution algebras of the discrete groups $\Isoint{\G}_x$, $x\in \Go$. The latter has been studied earlier, the untwisted case in \cite{Boidol84} and the twisted case in \cite{Austad20}. Using this we obtain several examples of groupoids $\G$ for which $\ell^1 (\G,\sigma)$ is $C^*$-unique in \cref{sec:Examples}. Additionally, we are able to deduce $C^*$-uniqueness of some wreath products using our groupoid approach, see \cref{exmp:wreath}. 
	
	We will proceed in the following manner. In \cref{sec:prelims} we will collect all results we will need regarding $C^*$-uniqueness of Banach $*$-algebras, $C^*$-algebra bundles, as well as cocycle-twisted convolution algebras associated to second-countable locally compact Hausdorff \'etale groupoids. In \cref{sec:main-sec} we first present our main theorem, \cref{thm:main-thm}. The remainder of the section will be dedicated to its proof. Lastly, in \cref{sec:Examples} we present examples of $C^*$-unique convolution algebras coming from groupoids, as well as deducing $C^*$-uniqueness of some wreath products.

	\section{Preliminaries}\label{sec:prelims}
	
	\subsection{$C^*$-uniqueness for Banach $*$-algebras}\label{sec:prelims-C*-uniqueness}
	A representation of a Banach $*$-algebra $\A$ is a $*$-homomorphism $\pi \colon \A \to B (\mathcal{H})$, where $B(\mathcal{H})$ are the bounded linear operators on a Hilbert space $\mathcal{H}$. We say $\A$ is \emph{reduced} if $\A_\mathcal{R}=\{a\in\A: \pi(a)=0\text{ for every }*-\text{representation } \pi \text{ of }\A\}=\{0\}$. All Banach $*$-algebras we consider in the sequel will be reduced. The \emph{enveloping $C^*$-algebra} of a reduced Banach $*$-algebra $\A$ is the unique $C^*$-algebra $C^*(\A)$ which admits the following universal property: there exists an injective $*$-homomorphism $\Phi:\A\to C^*(\A)$ with dense range so that for every $*$-representation $\pi:\A\to B(\mathcal{H})$, there exists a unique $*$-representation $\hat{\pi}:C^*(\A)\to B(\mathcal{H})$ so that $\pi=\hat{\pi}\circ \Phi$. In order to ease notation in the sequel we will identify $\A$ with the Banach $*$-subalgebra $\Phi (\A)$ of $C^* (\A)$ whenever it is natural to do so. The enveloping $C^*$-algebra of a Banach $*$-algebra always exists \cite[Section 10.1]{Palmer}.
	
	\begin{defn} Let $\A$ be a reduced Banach $*$-algebra. We say that $\A$ is \emph{$C^*$-unique} if the $C^*$-norm given by 
		$$\|a\|:=\sup\{\|\pi(a)\|: \pi:\A\to B(\mathcal{H}) \text{ is a $*$-representation} \}$$ 
		for every $a\in \A$, is the unique $C^*$-norm on $\A$. In other words, $\A$ is $C^*$-unique if $C^* (\A)$ is the unique $C^*$-completion of $\A$ up to isomorphism.
	\end{defn}
	
	We will make repeated use of the following result on $C^*$-uniqueness of Banach $*$-algebras, see \cite[Proposition 10.5.19]{Palmer}.
	\begin{prop}\label{prop:C*-uniqueness-intersection-property}
		Let $\A$ be a reduced Banach $*$-algebra with enveloping $C^*$-algebra $C^*(\A)$. Then $\A$ is $C^*$-unique if and only if for every nonzero two-sided closed ideal $I \vartriangleleft C^*(\A)$ we have $\A \cap I \neq \{0\}$.
	\end{prop}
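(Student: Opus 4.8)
The plan is to establish the standard correspondence between $C^*$-norms on $\A$ and closed two-sided ideals of $C^*(\A)$ that meet $\A$ trivially, and then read off the stated equivalence from it. Throughout I identify $\A$ with $\Phi(\A) \subseteq C^*(\A)$ and write $\|\cdot\|$ for the universal norm. The starting observation is that every $C^*$-norm $\|\cdot\|'$ on $\A$ is dominated by $\|\cdot\|$: completing $\A$ in $\|\cdot\|'$ yields a $C^*$-algebra $B$, a faithful representation $B \hookrightarrow B(\mathcal{H})$ restricts to a $*$-representation $\pi \colon \A \to B(\mathcal{H})$ with $\|\pi(a)\| = \|a\|'$, whence $\|a\|' \le \sup_\rho \|\rho(a)\| = \|a\|$ for all $a \in \A$.

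First I would prove the contrapositive of the implication ``intersection property $\Rightarrow$ $C^*$-unique''. Start from a $C^*$-norm $\|\cdot\|'$ and the representation $\pi$ above. By the universal property there is a $*$-homomorphism $\hat\pi \colon C^*(\A) \to B(\mathcal{H})$ with $\hat\pi \circ \Phi = \pi$; since $\pi(\A)$ is dense in $B$, the map $\rho := \hat\pi$ surjects onto $B$ and restricts to $\pi$ on $\A$. Put $I := \ker \rho$, a closed two-sided ideal of $C^*(\A)$. Because $\|\cdot\|'$ is a genuine norm, $\rho|_\A = \pi$ is injective, so $\A \cap I = \ker(\rho|_\A) = \{0\}$. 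If moreover $\|\cdot\|' \ne \|\cdot\|$, then $\rho$ is not isometric, hence not injective (a $*$-homomorphism of $C^*$-algebras being isometric precisely when injective), so $I \ne \{0\}$. Thus a failure of $C^*$-uniqueness produces a nonzero closed ideal $I$ with $\A \cap I = \{0\}$, exactly the contrapositive required.

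Conversely, suppose some nonzero closed two-sided ideal $I \vartriangleleft C^*(\A)$ satisfies $\A \cap I = \{0\}$, and let $q \colon C^*(\A) \to C^*(\A)/I$ be the quotient map. Since $\ker(q|_\A) = \A \cap I = \{0\}$, the map $q|_\A$ is injective, so $\|a\|' := \|q(a)\|$ defines a $C^*$-norm on $\A$ whose completion is $C^*(\A)/I$ (here I use that $\A$ is dense in $C^*(\A)$, so $q(\A)$ is dense in the quotient). It remains to check that $\|\cdot\|'$ differs from $\|\cdot\|$, which exhibits a second $C^*$-norm and defeats $C^*$-uniqueness: both $a \mapsto \|q(a)\|$ and $a \mapsto \|a\|$ are continuous on $C^*(\A)$, so if they agreed on the dense subset $\A$ they would agree everywhere, forcing $q$ to be isometric and hence injective, contradicting $I \ne \{0\}$. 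Therefore $\|a\|' < \|a\|$ for some $a \in \A$, and $\A$ fails to be $C^*$-unique.

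The one genuinely delicate point, and the step I expect to be the main obstacle, is this last transfer: a non-injective quotient $q$ of $C^*(\A)$ fails to be isometric somewhere on $C^*(\A)$, but one must push that strict inequality down to the dense subalgebra $\A$ itself before it can be interpreted as a bona fide second $C^*$-norm on $\A$. The density-plus-continuity argument above handles this, and it is precisely where the two ingredients ``$\A$ dense in $C^*(\A)$'' and ``injective $=$ isometric for $*$-homomorphisms of $C^*$-algebras'' are indispensable.
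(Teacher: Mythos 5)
Your proof is correct: both directions are handled properly via the contrapositive, and the key facts you invoke (every $C^*$-norm on $\A$ is dominated by the universal norm, injective $*$-homomorphisms of $C^*$-algebras are isometric, and density of $\A$ in $C^*(\A)$ plus continuity to transfer the strict norm inequality down to $\A$) are exactly the right ingredients. The paper gives no proof of this proposition at all --- it cites Proposition 10.5.19 of Palmer's book --- and your argument is the standard one establishing the correspondence between $C^*$-norms on $\A$ and closed two-sided ideals of $C^*(\A)$ meeting $\A$ trivially, so there is nothing to reconcile.
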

	
	\subsection{$C^*$-algebra bundles}\label{sec:prelims-bundles}
	The notion of a $C_0 (X)$-algebra will be of importance in the proof of the main theorem. 
	 Hence we briefly revise some basic notions and results on $C_0 (X)$-algebras and $C^*$-bundles. 
	\begin{defn} Let $X$ be a locally compact Hausdorff space. A \emph{$C_0(X)$-algebra} is a  $C^*$-algebra $A$ together with a non-degenerate injection $\iota:C_0(X)\to \mathcal{Z}(M(A))$, where the latter denotes the center of the multiplier algebra of $A$.
	\end{defn}
	We shall also need to consider (upper semi-continuous) $C^*$-bundles.
	\begin{defn}
		Let $X$ be a locally compact Hausdorff space and let $\{B_x\}_{x∈X}$ be a family of $C^*$-algebras. A map $f$ defined on $X$ such that $f(x)\in B_x$ for all $x\in X$, is called a \emph{section}. An \emph{upper semi-continuous $C^*$-bundle $\mathbf{B}$ over $X$} is a
		triple $(X, \{B_x\}_{x\in X}, \Gamma_0(\mathbf{B}))$, where $\Gamma_0(\mathbf{B})$ is a family of sections, such that the
		following conditions are satisfied:
		\begin{enumerate}
			\item $\Gamma_0(\mathbf{B})$ is a $C^*$-algebra under pointwise operations and supremum norm,
			\item for each $x\in X$, $B_x=\{f(x): f\in \Gamma_0(\mathbf{B}) \}$,
			\item for each $f\in \Gamma_0(\mathbf{B})$ and each $\varepsilon>0$, $\{x\in X: \vert f(x)\vert \geq \varepsilon\}$ is compact,
			\item $\Gamma_0(\mathbf{B})$ is closed under multiplication by $C_0(X)$, that is, for each $g\in C_0(X)$ and $f\in \Gamma_0(\mathbf{B})$, the section $gf$ defined by $gf(x)=g(x)f(x)$ is in $\Gamma_0(\mathbf{B})$. 
		\end{enumerate}
	\end{defn}
	
	The two above concepts can be combined to obtain the main theorem of \cite{Nilsen} which we present shortly for the reader's convenience. Suppose $X$ is a locally compact Hausdorff space, and suppose $A$ is a $C_0 (X)$-algebra with map $\iota \colon C_0 (X) \to \mathcal{Z}(M(A))$. For $x \in X$, denote by $J_x := C_0 (X\setminus \{x\})$ and realize $J_x \subseteq C_0 (X)$ in the natural way. Moreover, we define  $I_x :=  \iota (J_x) A$, which is a closed two-sided ideal of $A$. We then have the following result which will play a major role in the proof of \cref{thm:main-thm}.
	\begin{prop}[{\cite[Theorem 2.3]{Nilsen}}]\label{prop:nilsen-result}
		Let $X$ be a locally compact Hausdorff space and let $A$ be a $C_0 (X)$-algebra. Then there exists a unique upper semi-continuous $C^*$-bundle $\mathbf{B}$ over $X$ such that
		\begin{enumerate}
			\item [i)] the fibers $B_x = A /I_x$, and 
			\item [ii)] there is an isomorphism $\phi \colon A \to \Gamma_0 (\mathbf{B})$ satisfying $\phi (a)(x) = a + I_x$.
		\end{enumerate}
	\end{prop}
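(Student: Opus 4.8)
The plan is to realize $A$ as the section algebra of the bundle whose fiber over $x$ is the quotient $B_x := A/I_x$, with candidate section algebra $\Gamma_0(\mathbf{B}) := \phi(A)$ where $\phi(a)(x) = a + I_x$. Everything reduces to controlling the fiberwise norm function $N_a(x) := \|a + I_x\|$: I must show that $N_a$ is upper semi-continuous and that $\sup_x N_a(x) = \|a\|$. Granting these, $\phi$ becomes an isometric $*$-homomorphism and the bundle axioms follow. I would begin by recording the quotient-norm formula. Since $J_x = C_0(X\setminus\{x\})$ is an ideal of $C_0(X)$ it has an approximate identity $(g_\lambda)$ with $0\le g_\lambda\le 1$ and $g_\lambda(x)=0$; by non-degeneracy of $\iota$ the net $(\iota(g_\lambda))$ is an approximate identity for $I_x = \overline{\iota(J_x)A}$, whence $N_a(x) = \lim_\lambda \|a - \iota(g_\lambda)a\| = \inf\{\|a - \iota(g)a\| : g\in J_x,\ 0\le g\le 1\}$.

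The main obstacle is upper semi-continuity. Fix $x_0$ and $\varepsilon>0$ and choose $g\in J_{x_0}$, $0\le g\le 1$, with $\|a - \iota(g)a\| < N_a(x_0)+\varepsilon$. For a nearby point $x$ the function $g$ need not vanish at $x$, so I would cut it off: pick $\chi\in C_0(X)$ with $0\le\chi\le1$, $\chi(x)=0$, and $\chi\equiv 1$ outside a small neighbourhood $V$ of $x$, and set $h=\chi g\in J_x$. Then $g-h$ is supported in $V$ and bounded by $\sup_V g$, which is small once $V$ (hence $x$) is close to $x_0$, because $g(x_0)=0$ and $g$ is continuous. Using $\|\iota(g-h)a\|\le \|g-h\|_\infty\|a\|$ I obtain $N_a(x)\le \|a-\iota(h)a\|\le \|a-\iota(g)a\| + \|g-h\|_\infty\|a\| < N_a(x_0)+2\varepsilon$ for all $x$ in a neighbourhood of $x_0$. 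This cut-off estimate is where the real work lies.

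For the vanishing-at-infinity axiom, non-degeneracy gives $A = \overline{\iota(C_0(X))A}$, so $a$ is approximated by $\iota(g)b$ with $g\in C_c(X)$; since $\iota(g)$ acts on $B_x$ as the scalar $g(x)$, one checks $N_a(x)\le\|a-\iota(g)b\|<\varepsilon$ for $x\notin\supp g$, so $\{x: N_a(x)\ge\varepsilon\}\subseteq\supp g$ is compact (it is closed by upper semi-continuity). For the norm equality I would take an irreducible representation $\pi$ of $A$ with $\|\pi(a)\|=\|a\|$, extend it to $M(A)$, and note that $\iota(C_0(X))\subseteq\mathcal{Z}(M(A))$ must act by scalars, yielding a character of $C_0(X)$, i.e.\ a point $x_0$ with $\pi(\iota(g))=g(x_0)$. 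Then $\pi(I_{x_0})=0$, so $\pi$ factors through $B_{x_0}$ and $\|a\|=\|\pi(a)\|\le\|a+I_{x_0}\|\le\sup_x N_a(x)\le\|a\|$, forcing equality; in particular $\phi$ is isometric and injective.

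Finally I would assemble the bundle. Axioms (1) and (2) are immediate from the definition of $\Gamma_0(\mathbf{B})=\phi(A)$ and the isometry of $\phi$, and axiom (3) is the vanishing-at-infinity statement above. Axiom (4) follows from the identity $g\cdot\phi(a)=\phi(\iota(g)a)$; to justify $g(x)(a+I_x)=\iota(g)a+I_x$ in $B_x$ I would use the strictly continuous extension $\bar\iota\colon C_b(X)\to\mathcal{Z}(M(A))$ together with an approximate identity of $J_x$. Uniqueness of $\mathbf{B}$ is the standard fact that an upper semi-continuous $C^*$-bundle is determined by its fibers together with its section algebra, so any two bundles meeting i) and ii) are isomorphic over the identity.
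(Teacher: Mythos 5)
The paper offers no proof of this statement at all: it is imported verbatim from Nilsen \cite[Theorem 2.3]{Nilsen}, so the only meaningful comparison is with the standard proof in the literature, and your proposal is essentially that proof, correctly organized around the fiber-norm function $N_a(x)=\Vert a+I_x\Vert$: the quotient-norm formula via a (central) approximate identity of $J_x$, the cut-off argument for upper semi-continuity, the $C_c$-approximation for vanishing at infinity (note that these two steps together give exactly the paper's bundle axiom (3), since $N_a$ is upper semi-continuous precisely when $\{x: N_a(x)\ge\varepsilon\}$ is closed for every $\varepsilon>0$), the irreducible-representation/character argument for $\sup_x N_a(x)=\Vert a\Vert$, and the observation that uniqueness is nearly tautological in this formulation because i) and ii) force $\Gamma_0(\mathbf{B})$ to be the set of sections $x\mapsto a+I_x$. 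Two small repairs are needed. First, your cut-off $\chi$ cannot be taken in $C_0(X)$ when $X$ is non-compact, since a function identically $1$ outside a small neighbourhood $V$ is never $C_0$; take instead $\chi=1-\psi$ with $\psi\in C_c(X)$, $0\le\psi\le 1$, $\psi(x)=1$ and $\supp\psi\subseteq V$, so that $h=\chi g=g-\psi g\in J_x$ and $g-h=\psi g$ satisfies exactly your estimate $\Vert g-h\Vert_\infty\le\sup_V g$ (the same device, or Cohen factorization $a=\iota(u)b$, also cleanly justifies $g(x)(a+I_x)=\iota(g)a+I_x$ in your verification of axiom (4), since $gu-g(x)u\in J_x$). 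Second, in the isometry step you should say why $\bar\pi\circ\iota$ is a \emph{nonzero} character: if it vanished, then $\pi(\iota(g)b)=\bar\pi(\iota(g))\pi(b)=0$ for all $g\in C_0(X)$, $b\in A$, contradicting non-degeneracy of $\iota$ together with non-degeneracy of $\pi$; this is where non-degeneracy of the $C_0(X)$-structure is genuinely used (by contrast, your appeal to non-degeneracy to make $(\iota(g_\lambda))$ an approximate identity for $I_x$ is unnecessary --- Cohen factorization within $J_x$ already gives it). With these touch-ups the argument is complete and matches the cited source.
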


	\subsection{Groupoids, cocycle twists and associated algebras}\label{sec:prelim-groupoids}
	Given a groupoid $\G$ we will denote by $\G^{(0)}$ its unit space and write $r,s:\G\to \G^{(0)}$ for the range and source maps, respectively. We will also denote by $\G^{(2)}=\{(\alpha,\beta)\in \G\times \G: s(\alpha)=r(\beta) \}$ the set of \emph{composable elements}. In this paper, we will only consider groupoids $\G$ equipped with a second-countable locally compact Hausdorff topology making all the structure maps continuous. A groupoid $\G$ is called \emph{étale} if the range map, and hence also the source map, is a local homeomorphism. A subset $B$ of an \'etale groupoid $\G$ is called a \emph{bisection} if there is an open set $U\subseteq \G$ containing $B$ such that $r\colon U\to r(U)$ and $s\colon U\to s(U)$ are homeomorphisms onto open subsets of $\Go$. Second-countable locally compact Hausdorff \'etale groupoids have countable bases consisting of open bisections.
	
	Given $x\in \G^{(0)}$ we define by $\G_x:=\{\gamma\in \G: s(\gamma)=x\}$ and $\G^x:=\{\gamma\in \G: r(\gamma)=x\}$. Observe that if $\G$ is \'etale the sets $\G_x$ and $\G^x$ are discrete for every $x\in \G^{(0)}$. The \emph{isotropy group of $x$} is given by $\G_x^x:=\G^x\cap \G_x=\{\gamma\in \G: s(\gamma)=r(\gamma)=x\}$, and the \emph{isotropy subgroupoid of $\G$} is the subgroupoid $\Iso(\G):=\bigcup_{x\in \G^{(0)}}\G_x^x$ with the relative topology from $\G$. Let $\Isoint{G}$ denote the interior of $\Iso(\G)$. We then say that $\G$ is \emph{topologically principal} if $\Isoint{\G}=\G^{(0)}$. 
	
	We will consider groupoid twists where the twist is implemented by a normalized continuous $2$-cocycle. To be more precise, let $\G$ be a second countable locally compact \'etale groupoid. A \emph{normalized continuous $2$-cocycle} is then a continuous map $\sigma \colon \G^{(2)} \to \T$ satisfying
	\begin{equation}\label{eq:2-cocycle-unit-condition}
	\sigma (r(\gamma),\gamma) = 1 = \sigma (\gamma, s(\gamma))
	\end{equation}
	for all $\gamma \in \G$, and
	\begin{equation}\label{eq:2-cocycle-associativity-condition}
	\sigma (\alpha,\beta) \sigma (\alpha\beta,\gamma) = \sigma(\beta,\gamma) \sigma(\alpha,\beta\gamma)
	\end{equation}
	whenever $(\alpha,\beta),(\beta,\gamma)\in \G^{(2)}$. The set of normalized continuous $2$-cocycles on $\G$ will be denoted $Z^2 (\G,\T)$. Note that this is not the most general notion of a twist of a groupoid (see \cite[Chapter 5]{Sims}).
	
	Let $\G$ be a second-countable locally compact Hausdorff étale groupoid. We will define the $\sigma$-twisted convolution algebra $C_c (\G,\sigma)$ as follows: As a set it is just 
	$$C_c (\G,\sigma)=\{f:\G\to \C: f \text{ is continuous with compact support}\},$$ 
	but equipped with $\sigma$-twisted convolution product
	\begin{equation}\label{eq:twisted-conv}
	(f *_\sigma g) (\gamma) = 
	\sum_{\mu \in \G_{s(\gamma)}} f(\gamma\mu^{-1}) g(\mu) \sigma (\gamma\mu^{-1},\mu), \quad \text{$f,g\in C_c (\G,\sigma)$, $\gamma \in \G$,}
	\end{equation}
	and $\sigma$-twisted involution
	\begin{equation}\label{eq:twisted-inv}
	f^{*_\sigma}(\gamma) = \overline{\sigma(\gamma^{-1},\gamma)} \overline{f(\gamma^{-1})}, \quad \text{$f \in C_c (\G,\sigma)$, $\gamma \in \G$.}
	\end{equation}
	We complete $C_c (\G,\sigma)$ in the ''fiberwise $1$-norm'', also known as the $I$-norm, given by
	\begin{equation}\label{eq:I-norm}
	\Vert f \Vert_I = \sup_{x \in \Go}\max \{ \sum_{\gamma \in \G_x} \vert f(\gamma) \vert , \sum_{\gamma \in \G^{x}} \vert f(\gamma)\vert   \}
	\end{equation}
	for $f \in C_c (\G,\sigma)$. Denote by $\ell^1 (\G,\sigma)$ the completion of $C_c (\G,\sigma)$ with respect to the $I$-norm. This is a Banach $*$-algebra with the natural extensions of \eqref{eq:twisted-conv} and \eqref{eq:twisted-inv}. For later use we record the following lemma.
	\begin{lemma}\label{lemma:ell-1-cont-lemma}
	Let $\G$ be a second-countable locally compact Hausdorff \'etale groupoid. Then for any $f \in \ell^1 (\G)$, the map defined by 
	\begin{equation}\label{eq:ell-1-assignment}
	    \begin{split}
	        \Go \ni x \mapsto \max\{\sum_{\gamma \in \G_x} \vert f(\gamma)\vert ,\sum_{\gamma \in \G^x} \vert f(\gamma)\vert \},
	    \end{split}
	\end{equation}
	is continuous.
	\end{lemma}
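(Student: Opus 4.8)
The plan is to prove the statement first for $f \in C_c(\G)$ and then extend to general $f \in \ell^1(\G)$ by uniform approximation. Since the maximum of two continuous functions is continuous, it suffices to show separately that $x \mapsto \sum_{\gamma \in \G_x} \vert f(\gamma)\vert$ and $x \mapsto \sum_{\gamma \in \G^x} \vert f(\gamma)\vert$ are continuous. These two cases are symmetric under interchanging the source map $s$ with the range map $r$, so I would treat only the source case in detail. As a preliminary reduction, I replace $f$ by $g := \vert f\vert$, which is again continuous, compactly supported, and now \emph{nonnegative}; this is the key device that lets the absolute values interact correctly with a partition-of-unity decomposition.

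For $g \in C_c(\G)$ with $g \geq 0$, I would cover the compact set $\supp(g)$ by finitely many open bisections $B_1, \dots, B_n$ and choose a partition of unity $\phi_1, \dots, \phi_n$ subordinate to this cover with $\sum_i \phi_i = 1$ on $\supp(g)$. Then $g = \sum_{i=1}^n \phi_i g$, where each summand $\phi_i g$ is nonnegative, continuous, and compactly supported inside the bisection $B_i$. Because every term is nonnegative and the fiber sums are everywhere finite, I may interchange summations to write $\sum_{\gamma \in \G_x} g(\gamma) = \sum_{i=1}^n \sum_{\gamma \in \G_x}(\phi_i g)(\gamma)$. Now I analyze a single term: since $B_i$ is a bisection, $s\vert_{B_i}$ is a homeomorphism onto the open set $s(B_i)$, so for each $x$ there is at most one $\gamma \in B_i$ with $s(\gamma)=x$. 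Hence $\sum_{\gamma \in \G_x}(\phi_i g)(\gamma)$ equals $(\phi_i g)\big((s\vert_{B_i})^{-1}(x)\big)$ for $x \in s(B_i)$ and $0$ otherwise. This is the composition of the continuous $\phi_i g$ with the homeomorphism $(s\vert_{B_i})^{-1}$ on the open set $s(B_i)$, and its support is the compact set $s(\supp(\phi_i g)) \subseteq s(B_i)$; a continuous function on an open set whose support is a compact subset of that set extends by zero to a continuous function on all of $\Go$. Each term is therefore continuous on $\Go$, and so is their finite sum. The same argument with $r\vert_{B_i}$ settles the range sum, completing the $C_c(\G)$ case.

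To pass to $f \in \ell^1(\G)$, I first note that $\vert f(\gamma)\vert \leq \Vert f \Vert_I$ for every $\gamma$, so elements of $\ell^1(\G)$ are genuine functions on $\G$ with finite $I$-norm and the sums in \eqref{eq:ell-1-assignment} are finite. Choose $f_m \in C_c(\G)$ with $\Vert f - f_m\Vert_I \to 0$, and let $F$ and $F_m$ denote the maps in \eqref{eq:ell-1-assignment} attached to $f$ and $f_m$. Using $\vert \max(a,b) - \max(c,d)\vert \leq \max(\vert a-c\vert, \vert b-d\vert)$ together with the estimate $\big\vert \sum_{\gamma \in \G_x}\vert f_m(\gamma)\vert - \sum_{\gamma \in \G_x}\vert f(\gamma)\vert \big\vert \leq \sum_{\gamma \in \G_x}\vert f_m(\gamma)-f(\gamma)\vert \leq \Vert f_m - f\Vert_I$ and its analogue for $\G^x$, I obtain $\sup_{x \in \Go}\vert F_m(x) - F(x)\vert \leq \Vert f_m - f\Vert_I \to 0$. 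Thus $F$ is a uniform limit of continuous functions and is therefore continuous.

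I expect the main obstacle to be exactly the interplay between the absolute value and summation over \emph{overlapping} bisections: a point of $\G$ may lie in several $B_i$, so one cannot naively split the fiber sum. Passing to $g = \vert f\vert \geq 0$ before applying the partition of unity resolves this, since nonnegativity legitimizes the interchange of sums. The only other point requiring care is the extension-by-zero continuity of each single-bisection term, which hinges on the support being a compact subset of the open set $s(B_i)$.
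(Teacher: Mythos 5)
Your proof is correct, and it shares the paper's basic skeleton --- reduce by density to $C_c(\G)$, decompose over finitely many open bisections, and exploit that $s$ restricts to a homeomorphism on each bisection --- but your execution differs at exactly the point where the paper's own argument is loose. The paper reduces to the case $\supp(f)\subseteq U$ for a \emph{single} bisection $U$ by invoking $C_c(\G)=\linspan\{g\in C_c(\G): g \text{ supported on a bisection}\}$; since the assignment $f\mapsto F$ is not linear in $f$ (it involves $\vert f\vert$ and a maximum), continuity for bisection-supported functions does not formally transfer to their linear combinations, so this reduction is not justified as stated. Your device of first replacing $f$ by $g=\vert f\vert\geq 0$ and only then splitting $g=\sum_{i=1}^n\phi_i g$ via a partition of unity subordinate to a cover of $\supp(g)$ by bisections is precisely what repairs this: the fiber sum $\sum_{\gamma\in\G_x}g(\gamma)$ is linear in $g$, the absolute value is applied \emph{before} the decomposition, and nonnegativity of each summand means no triangle-inequality loss occurs. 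You also make explicit two steps the paper merely asserts or glosses: the density step, via the uniform estimate $\sup_{x\in\Go}\vert F_m(x)-F(x)\vert\leq\Vert f_m-f\Vert_I$ (using $\vert\max(a,b)-\max(c,d)\vert\leq\max(\vert a-c\vert,\vert b-d\vert)$), and continuity at points outside $s(B_i)$, which you handle correctly by noting that each single-bisection term has support in the compact set $s(\supp(\phi_i g))\subseteq s(B_i)$ and hence extends continuously by zero --- the paper instead uses a sequential argument on $s(U)$ and passes quickly over boundary points of $s(U)$, where the same compact-support observation is implicitly needed. In short: same route, but your write-up closes the gaps, and every step you rely on (nonnegativity, the partition of unity, compactness of the pushed-forward supports) is sound.
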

	\begin{proof}
	    By density it is enough to show this for $f \in C_c (\G)$. It is well-known that $C_c (\G) = \linspan \{g\in C_c (\G) : g \text{ is supported on a bisection}\}$. Hence we may assume $f$ is supported on a bisection $U$, i.e.\ $\supp(f) \subseteq U$. Furthermore, for $f$ we denote the assignment of \eqref{eq:ell-1-assignment} by $F$. We thus wish to show that $F \in C(\Go)$.
	    
	    To this end, fix $x \in \Go$. As $f(x) = 0$ if $x\not\in s(U)$, we assume $x \in s(U)$. Since $s(x) = x$ and $s \colon U \to s(U)$ is a homeomorphism, we therefore have $x \in U$.  
	    Moreover, let $(x_i)_i \subseteq \Go$ be such that $x_i \to x$. Then eventually $x_i \in s(U)$ for all $i$ large enough. For such $i$ we have $F (x_i) = \vert f(\gamma_i)\vert$, where $\gamma_i$ is the unique element of $U$ with $s (\gamma_i)=x_i$. Now, as $s \colon U \to s(U)$ is a homeomorphism and $x_i \to x$, we have $\gamma_i \to \gamma \in U$, where $\gamma$ is the unique element of $U$ such that $s (\gamma) = x$. As $f \in C_c (\G)$, it follows that $f (\gamma_i)\to f(\gamma)$, and hence $F (x_i) \to F(x)$. Hence $F\in C(\Go)$, and the result follows. 
	\end{proof}

	We wish to understand when $\ell^1 (\G,\sigma)$ is $C^*$-unique, i.e.\ when it only permits one separating $C^*$-norm. To do this it will be of importance to use \cref{prop:C*-uniqueness-intersection-property}. 
	
	The \emph{(full) twisted groupoid $C^*$-algebra} $C^*(\G,\sigma)$ is the completion of $C_c (\G,\sigma)$ in the norm
	\begin{equation}
	\Vert f \Vert := \sup \{ \Vert \pi (f) \Vert : \text{$\pi$ is an $I$-norm bounded $*$-representation}\},
	\end{equation}
	for $f \in C_c (\G,\sigma)$.
	It was observed in \cite[Lemma 3.3.19]{Armstrong} that if $\G$ is étale, then every $*$-representation of $C_c(\G,\sigma)$ is bounded by the $I$-norm. 
	Then, since we are completing with respect to a supremum over $*$-representations, $C^*(\G,\sigma)$ is just the $C^*$-envelope of $\ell^1 (\G,\sigma)$. 
	
	Now we will construct a faithful representation of $\ell^1(\G,\sigma)$ called the \emph{$\sigma$-twisted left regular representation}. In particular, we have that $\ell^1(\G,\sigma)$ is reduced. The completion of the image of $\ell^1(\G,\sigma)$ under the left regular representation is called the \emph{$\sigma$-twisted reduced groupoid $C^*$-algebra of $\G$} and will be denoted $C^*_r (\G,\sigma)$. Let $x \in \Go$. Then there is a representation $L^{\sigma,x} \colon C_c (\G,\sigma) \to B(\ell^2 (\G_x))$ which is given by
	\begin{equation}\label{eq:2-twisted-left-reg-rep}
	L^{\sigma,x} (f) \delta_\gamma = \sum_{\mu \in \G^{r(\gamma)}} \sigma (\mu,\mu^{-1}\gamma)f(\mu) \delta_{\mu \gamma}, \quad \text{for $f \in C_c (\G,\sigma)$ and $\gamma \in \G_x$}.
	\end{equation}
	We then obtain a faithful $I$-norm bounded $*$-representation of $C_c (\G,\sigma)$ given by
	\begin{equation}
	\bigoplus_{x\in \Go} L^{\sigma,x} \colon C_c(\G,\sigma) \to \bigoplus_{x\in \Go} B(\ell^2 (\G_x)) \subset B(\bigoplus_{x\in \Go} \ell^2 (\G_x)).
	\end{equation}
	$C^*_r (\G,\sigma)$ is then the completion of the image of  $C_c (\G,\sigma)$ under the left regular representation. As the $*$-representation is $I$-norm bounded, $C^*_r (\G,\sigma)$ is also the completion of $\ell^1 (\G,\sigma)$ in the same norm. Therefore, since $C^*(\G,\sigma)$ is the $C^*$-envelope of $\ell^1(\G,\sigma)$, by universality, there exist a natural (surjective) $*$-homomorphism $\lambda:C^*(\G,\sigma)\to C^*_r(\G,\sigma)$. 
	

	\begin{defn}
		Let $\G$ be a second-countable locally compact Hausdorff groupoid and let $\sigma \in Z^2(G,\T)$. We say that $\G$ has the \emph{$\sigma$-weak containment property} when the natural map $\lambda \colon C^* (\G,\sigma)\to C^*_r (\G,\sigma)$ is an isomorphism.
	\end{defn}
	
	If $\G$ is an amenable groupoid \cite{AnDRen}, we have that  $C^*_r (\G,\sigma) = C^* (\G,\sigma)$ for every $\sigma \in Z^2 (\G, \T)$ \cite[Proposition 6.1.8]{AnDRen}, and hence $\G$ has the $\sigma$-weak containment property for every $\sigma \in Z^2 (\G, \T)$. In \cite{Will} it was proved that amenability is not equivalent to having the weak containment property. On the other hand, it is not known to the authors whether the $1$-weak containment property (where $1$ denotes the trivial twist) is equivalent to the $\sigma$-weak containment property for every  $\sigma \in Z^2 (\G, \T)$.


	\begin{rmk}\label{rmk:C*-uniqueness-ell1-vs-group-ring}
		While both $\ell^1 (\G,\sigma)$ and $C_c (\G,\sigma)$ complete to the same $C^*$-algebras $C^* (\G,\sigma)$ and $C^*_r (\G,\sigma)$ in the above setup, the question of $C^*$-uniqueness of $\ell^1 (\G,\sigma)$ is not equivalent to $C^*$-uniqueness of the $*$-algebra $C_c (\G,\sigma)$. To see this, let $\G = \Z$, the group of integers and consider the trivial twist $\sigma=1$. Then $\ell^1 (\Z,1) = \ell^1 (\Z)$ is $C^*$-unique by \cite{Boidol80}, while $C_c(\Z) = \C [\Z]$ is not $C^*$-unique by \cite[Proposition 2.4]{AlKy19}.
	\end{rmk}

	Denoting the restriction of $\sigma$ to $\Isoint{G}\subseteq \G$ also by $\sigma$, we define the Banach $*$-subalgebra  $\ell^1(\Isoint{\G},\sigma)$  of $\ell^1(\G,\sigma)$.  We then have the following result.  
	\begin{prop}[{\cite[Proposition 5.3.1]{Armstrong}}]\label{prop:first-armstrong-result}
		Let $\G$ be a second-countable locally compact Hausdorff \'etale groupoid and $\sigma \in Z^2 (\G,\T)$. There is a $*$-homomorphism 
		\begin{equation*}
		\iota \colon C^* (\Isoint{\G},\sigma) \to C^* (\G,\sigma)
		\end{equation*}
		such that
		\begin{equation*}
		\iota(f)(\gamma) = 
		\begin{cases}
		f(\gamma) & \text{if $\gamma \in \Isoint{\G}$,}\\
		0 & \text{otherwise,}
		\end{cases}
		\end{equation*}
		for all $f \in C_c (\Isoint{\G}, \sigma)$. This homomorphism descends to an injective $*$-homomorphism 
		\begin{equation*}
		\iota_r \colon C^*_r (\Isoint{\G},\sigma) \to C^*_r (\G,\sigma).
		\end{equation*}
	\end{prop}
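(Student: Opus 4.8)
The plan is to construct $\iota$ first on the dense twisted convolution algebras by extension by zero, then lift it to the full $C^*$-algebras via the universal property, and finally analyse the twisted left regular representation fibrewise to obtain the reduced statement. Since $\Isoint{\G}$ is by definition open in $\G$, it is itself a second-countable locally compact Hausdorff \'etale groupoid, it is a subgroupoid of $\G$, and it contains the unit space $\Go$. Because $\Isoint{\G}$ is open, the extension by zero of any $f \in C_c(\Isoint{\G},\sigma)$ is continuous with compact support, so the prescribed formula defines a linear map $\iota \colon C_c(\Isoint{\G},\sigma)\to C_c(\G,\sigma)$. I would first check this is a $*$-homomorphism: since $\Isoint{\G}$ is a subgroupoid, if $\mu$ and $\gamma\mu^{-1}$ both lie in $\Isoint{\G}$ then so does $\gamma=(\gamma\mu^{-1})\mu$, so in the convolution \eqref{eq:twisted-conv} only composable pairs internal to $\Isoint{\G}$ contribute, the summation fibre $\Isoint{\G}_{s(\gamma)}$ sits inside $\G_{s(\gamma)}$, and the cocycle values appearing are exactly those of the restricted cocycle; compatibility with the involution \eqref{eq:twisted-inv} is immediate. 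The same bookkeeping shows $\iota$ is isometric for the $I$-norm \eqref{eq:I-norm}, the fibre sums for $\iota(f)$ over $\G_x$ and over $\G^x$ collapsing to the single sum over $\Isoint{\G}_x$, since range and source fibres coincide within the isotropy.

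For the full algebra I would compose $\iota$ with a faithful representation of $C^*(\G,\sigma)$ on a Hilbert space $H$. This yields a $*$-representation of $C_c(\Isoint{\G},\sigma)$ on $H$, which is automatically $I$-norm bounded because $\Isoint{\G}$ is \'etale (\cite[Lemma 3.3.19]{Armstrong}), and therefore extends to $C^*(\Isoint{\G},\sigma)$ by the universal property of the enveloping $C^*$-algebra. As its image lies in the closed copy of $C^*(\G,\sigma)$ inside $B(H)$, this produces the required $*$-homomorphism $\iota \colon C^*(\Isoint{\G},\sigma)\to C^*(\G,\sigma)$ extending the map on $C_c$. Equivalently, every representation of $C_c(\G,\sigma)$ pulls back along $\iota$ to a representation of $C_c(\Isoint{\G},\sigma)$, which gives $\|\iota(f)\|_{C^*(\G,\sigma)}\le\|f\|_{C^*(\Isoint{\G},\sigma)}$ directly.

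For the reduced algebra the decisive step is to decompose $L^{\sigma,x}\circ\iota$ for each $x\in\Go$. Since $\iota(f)$ is supported on the isotropy, the sum in \eqref{eq:2-twisted-left-reg-rep} runs only over $\mu\in\Isoint{\G}_{r(\gamma)}$, and then $\mu\gamma$ has the same source and range as $\gamma$. Hence, writing $\G_x^y=\{\gamma: s(\gamma)=x,\ r(\gamma)=y\}$, each subspace $\ell^2(\G_x^y)$ with $y$ in the orbit of $x$ is invariant and $\ell^2(\G_x)=\bigoplus_y \ell^2(\G_x^y)$. On the summand $y=x$ the subspace $\ell^2(\Isoint{\G}_x)\subseteq\ell^2(\G_x^x)$ is invariant and the restriction of $L^{\sigma,x}(\iota(f))$ equals $L^{\sigma,x}_{\Isoint{\G}}(f)$, giving $\|L^{\sigma,x}_{\Isoint{\G}}(f)\|\le\|L^{\sigma,x}(\iota(f))\|$. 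Conversely, on each $\ell^2(\G_x^y)$ the group $\Isoint{\G}_y$ acts freely on $\G_x^y$ by left multiplication, and after choosing a representative $\gamma_0$ of each coset the unitary $W_{\gamma_0}\colon\ell^2(\Isoint{\G}_y)\to\ell^2(\Isoint{\G}_y\gamma_0)$, $W_{\gamma_0}\delta_\nu=\sigma(\nu,\gamma_0)\delta_{\nu\gamma_0}$, intertwines $L^{\sigma,y}_{\Isoint{\G}}(f)$ with the corresponding coset block of $L^{\sigma,x}(\iota(f))$. The verification that $W_{\gamma_0}$ genuinely intertwines is precisely the $2$-cocycle identity \eqref{eq:2-cocycle-associativity-condition}, and this computation is the main obstacle of the argument. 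It shows $\|L^{\sigma,x}(\iota(f))\|$ equals the supremum of $\|L^{\sigma,y}_{\Isoint{\G}}(f)\|$ over $y$ in the orbit of $x$, and in particular $\|L^{\sigma,x}(\iota(f))\|\le\|f\|_{C^*_r(\Isoint{\G},\sigma)}$.

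Taking suprema over $x\in\Go$ and combining the two inequalities yields $\|\iota(f)\|_{C^*_r(\G,\sigma)}=\|f\|_{C^*_r(\Isoint{\G},\sigma)}$, so $\iota$ is isometric for the reduced norms. Since the reduced norms are the quotients of the full norms under the canonical surjections $\lambda$, this equality shows that $\iota$ carries the kernel of $\lambda$ for $\Isoint{\G}$ into the kernel of $\lambda$ for $\G$; hence $\iota$ descends to a well-defined $*$-homomorphism $\iota_r\colon C^*_r(\Isoint{\G},\sigma)\to C^*_r(\G,\sigma)$, and the isometry makes $\iota_r$ injective.
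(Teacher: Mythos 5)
Your argument is correct, and it is worth noting that the paper contains no proof of this proposition to compare against: the statement is imported wholesale from Armstrong's thesis \cite[Proposition 5.3.1]{Armstrong}, so your write-up is a self-contained substitute for a citation rather than an alternative to an internal argument. The route you chose is the natural one, and the decisive step --- the coset analysis of the reduced norm --- is carried out correctly. Since $\iota(f)$ is supported in the isotropy, each $L^{\sigma,x}(\iota(f))$ preserves the decomposition of $\ell^2(\G_x)$ into the subspaces $\ell^2(\Isoint{\G}_y\gamma_0)$ indexed by the free left $\Isoint{\G}_y$-cosets inside $\G_x^y$, with $y$ ranging over the orbit of $x$, and your unitaries $W_{\gamma_0}\delta_\nu=\sigma(\nu,\gamma_0)\delta_{\nu\gamma_0}$ do intertwine: the required coefficient identity $\sigma(\mu,\nu)\sigma(\mu\nu,\gamma_0)=\sigma(\nu,\gamma_0)\sigma(\mu,\nu\gamma_0)$ is exactly \eqref{eq:2-cocycle-associativity-condition} applied to the composable triple $(\mu,\nu,\gamma_0)$. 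Because every $y\in\Go$ lies in its own orbit (take $x=y$ and $\gamma_0=y$, whose coset is $\Isoint{\G}_y$ itself), taking suprema over the blocks and over $x$ gives the exact equality $\Vert\iota(f)\Vert_{C^*_r(\G,\sigma)}=\Vert f\Vert_{C^*_r(\Isoint{\G},\sigma)}$, which is stronger than the bare injectivity asserted in the statement; your kernel argument for descending through the canonical surjections is then sound (equivalently, the isometry on the dense subalgebra extends directly to an isometric $*$-homomorphism $C^*_r(\Isoint{\G},\sigma)\to C^*_r(\G,\sigma)$).

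Two minor points, neither a gap. First, the paper's displayed formula \eqref{eq:2-twisted-left-reg-rep} is garbled: summing over $\mu\in\G^{r(\gamma)}$ is incompatible with forming the product $\mu\gamma$, which requires $\mu\in\G_{r(\gamma)}$. Your computation tacitly uses the correct convention $L^{\sigma,x}(f)\delta_\gamma=\sum_{\mu\in\G_{r(\gamma)}}\sigma(\mu,\gamma)f(\mu)\delta_{\mu\gamma}$, and for isotropy-supported functions the two index sets coincide anyway, so nothing in your argument is affected. Second, in the full-algebra step the detour through a faithful representation of $C^*(\G,\sigma)$ is unnecessary: your own parenthetical observation --- that every representation of $C_c(\G,\sigma)$ pulls back along $\iota$ to a representation of $C_c(\Isoint{\G},\sigma)$, automatically $I$-norm bounded since $\Isoint{\G}$ is an open, hence \'etale, subgroupoid containing $\Go$ --- already yields $\Vert\iota(f)\Vert_{C^*(\G,\sigma)}\le\Vert f\Vert_{C^*(\Isoint{\G},\sigma)}$ and with it the extension to the enveloping $C^*$-algebras.
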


	We observe that the homomorphism $\iota$ is an isometry at the $\ell^1$-level, i.e.\ that $\iota \colon \ell^1 (\Isoint{\G},\sigma) \to \ell^1 (\G,\sigma)$ is an isometric $*$-homomorphism. 
	
	We then also have the following result from \cite{Armstrong}  which will be key to our approach to study $C^*$-uniqueness of twisted groupoid convolution algebras in \cref{sec:main-sec}.
	
	\begin{prop}[{\cite[Theorem 5.3.13]{Armstrong}}]\label{prop:twisted-injective-iff-injective-on-subalg}
		Let $\G$ be a second-countable locally compact Hausdorff \'etale groupoid and let $\sigma \in Z^2 (\G, \T)$. Let $\iota_r \colon C^*_r (\Isoint{\G}, \sigma) \to C^*_r (\G,\sigma)$ be the injective $*$-homomorphism of \cref{prop:first-armstrong-result}. 
		Suppose $A$ is a $C^*$-algebra and that $\Psi \colon C^*_r (\G,\sigma) \to A$ is a homomorphism. Then $\Psi$ is injective if and only if $\Psi \circ \iota_r \colon C^*_r (\Isoint{\G},\sigma)\to A$ is and injective homomorphism.
	\end{prop}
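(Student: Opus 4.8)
The plan is to first dispose of the easy implication and then recast the hard one as an ideal-detection statement. Since $\iota_r$ is injective by \cref{prop:first-armstrong-result}, if $\Psi$ is injective then so is the composite $\Psi \circ \iota_r$, which settles the forward direction. For the converse, observe that $\Psi \circ \iota_r$ is injective exactly when $\ker(\Psi \circ \iota_r) = 0$, and by injectivity of $\iota_r$ this is equivalent to $\ker \Psi \cap \iota_r\big(C^*_r(\Isoint{\G},\sigma)\big) = 0$. Writing $I := \ker\Psi$ and identifying $C^*_r(\Isoint{\G},\sigma)$ with its image under $\iota_r$, the content of the proposition becomes the assertion that $C^*_r(\Isoint{\G},\sigma)$ \emph{detects ideals}: if $I$ is a closed two-sided ideal of $C^*_r(\G,\sigma)$ with $I \cap C^*_r(\Isoint{\G},\sigma) = 0$, then $I = 0$.

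The two tools I would assemble are a conditional expectation and a localization lemma. First, the étale structure yields a faithful conditional expectation $E \colon C^*_r(\G,\sigma) \to C^*_r(\Isoint{\G},\sigma)$, determined on $C_c(\G,\sigma)$ by restriction of functions to the open subgroupoid $\Isoint{\G}$; its faithfulness on positive elements is the crucial property, and refines the canonical faithful expectation onto $C_0(\Go)$. One also uses that $C_0(\Go) \subseteq C^*_r(\Isoint{\G},\sigma)$ provides an approximate unit for the whole algebra. Second, and this is the heart of the matter, I would prove a Kishimoto-type localization lemma: \emph{for every $a \in C^*_r(\G,\sigma)$ with $a \geq 0$, setting $b := E(a)$, and for every $\varepsilon > 0$, there is $h \in C_0(\Go)$ with $0 \leq h \leq 1$ such that $\|h(a-b)h\| < \varepsilon$ while $\|hbh\| > \|b\| - \varepsilon$.}

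Granting the lemma, the end game is short. Suppose for contradiction that $I \cap C^*_r(\Isoint{\G},\sigma) = 0$ but $I \neq 0$, and pick $a \in I$ with $a \geq 0$ and $a \neq 0$. Faithfulness of $E$ gives $b := E(a) \neq 0$. Let $q \colon C^*_r(\G,\sigma) \to C^*_r(\G,\sigma)/I$ be the quotient map; the hypothesis says precisely that $q$ restricts to an injective, hence isometric, $*$-homomorphism on $C^*_r(\Isoint{\G},\sigma)$. Applying the lemma produces $h \in C_0(\Go)$, and since $b \in C^*_r(\Isoint{\G},\sigma)$ and $h \in C_0(\Go) \subseteq C^*_r(\Isoint{\G},\sigma)$ we have $hbh \in C^*_r(\Isoint{\G},\sigma)$, whereas $hah \in I$ forces $q(hah) = 0$. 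Therefore $q(hbh) = q\big(h(b-a)h\big)$, and combining the isometry on the subalgebra with contractivity of $q$ yields $\|b\| - \varepsilon < \|hbh\| = \|q(hbh)\| = \|q(h(b-a)h)\| \leq \|h(a-b)h\| < \varepsilon$, so $\|b\| < 2\varepsilon$. As $\varepsilon$ was arbitrary this contradicts $b \neq 0$, whence $I = 0$ and the converse follows.

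The main obstacle is the localization lemma, namely building $h$ that simultaneously preserves the ``diagonal'' mass of $b$ on $\Isoint{\G}$ and annihilates the ``off-isotropy'' part $a - b$. The decisive topological input is that $\Isoint{\G}$ is by definition the \emph{interior} of the isotropy: every $\gamma \in \G \setminus \Isoint{\G}$ has, in each of its neighborhoods, elements $\eta$ with $r(\eta) \neq s(\eta)$ (either because $\gamma \notin \Iso(\G)$, or because $\gamma$ is non-interior isotropy and hence approximable by non-isotropy elements). After reducing to $a \in C_c(\G,\sigma)$ supported on finitely many bisections, the part of its support lying in $\G \setminus \Isoint{\G}$ is compact; I would cover it using this separation property and a partition of unity to manufacture $h \in C_0(\Go)$ whose products $h(r(\gamma))h(s(\gamma))$ are uniformly small along that compact set, while arranging, via the approximate-unit property of $C_0(\Go)$ inside $C^*_r(\Isoint{\G},\sigma)$, that $h$ remains close to $1$ on a neighborhood carrying most of the norm of $b$. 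The competition between these two requirements is exactly the delicate point. The cocycle $\sigma$ plays no obstructive role: it is continuous and $\T$-valued and restricts trivially to units by \eqref{eq:2-cocycle-unit-condition}, so it contributes only unimodular phases that leave all the norm estimates unaffected.
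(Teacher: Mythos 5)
Your overall architecture (easy direction from injectivity of $\iota_r$, reduction to ideal detection, compression by $h \in C_0(\Go)$, and the contradiction via isometry of the quotient map on the subalgebra) is exactly the skeleton of the proof the paper relies on by citation — \cite[Theorem 5.3.13]{Armstrong}, which is the twisted version of the Brown--Nagy--Reznikoff--Sims--Williams uniqueness theorem; the paper itself gives no proof. But your first tool is genuinely flawed: the faithful conditional expectation $E \colon C^*_r(\G,\sigma) \to C^*_r(\Isoint{\G},\sigma)$ ``determined by restriction of functions'' does not exist in general. The subgroupoid $\Isoint{\G}$ is open but typically \emph{not closed} in $\G$, so for $f \in C_c(\G,\sigma)$ the restriction $f\vert_{\Isoint{\G}}$ need not vanish at infinity on $\Isoint{\G}$ (take $f$ supported on a bisection $U$ with $f$ nonzero on the boundary of $\Isoint{\G}\cap U$); since elements of $C^*_r(\Isoint{\G},\sigma)$ are $C_0$-functions on $\Isoint{\G}$ via the canonical $j$-map, such a restriction lies outside the algebra, and no bounded extension of the restriction map exists. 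This failure is precisely why the literature works instead with the faithful expectation $\Phi_0 \colon C^*_r(\G,\sigma) \to C_0(\Go)$, noting $C_0(\Go) \subseteq C^*_r(\Isoint{\G},\sigma)$, and why generalized (``essential'') expectations were later invented for such situations. Your endgame survives verbatim if you set $b := \Phi_0(a)$ rather than $b := E(a)$.

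The second gap is in the localization lemma itself, and it is not merely the ``delicate point'' you flag but an obstruction your covering strategy cannot resolve: for $\gamma \in \Iso(\G) \setminus \Isoint{\G}$ (boundary isotropy) one has $r(\gamma) = s(\gamma)$, so no choice of $h$ makes $h(r(\gamma))h(s(\gamma))$ small without killing $h$ at that very unit — and that unit may be exactly where $b$ carries its norm. The actual resolution requires second countability, which your argument never invokes although the proposition assumes it: by a Baire category argument over a countable cover by open bisections, the set $X = \{x \in \Go : \G^x_x \subseteq \Isoint{\G}\}$ is dense in $\Go$. One then localizes $h$ near a point $x \in X$ where $\vert b(x)\vert$ nearly attains $\Vert b \Vert$ (possible since $b \in C_0(\Go)$ is continuous and $X$ is dense); because no element of $\supp(f) \setminus \Isoint{\G}$ over such $x$ is isotropy, shrinking $\supp(h)$ makes the compression $h *_\sigma f *_\sigma h$ of a $C_c$-approximant $f$ of $a$ land \emph{exactly} in $C_c(\Isoint{\G},\sigma)$, not just approximately — after which your contradiction with the isometry of $q$ on $C^*_r(\Isoint{\G},\sigma)$ goes through. (You are right that the cocycle $\sigma$ is harmless throughout.) So: correct strategy, correct endgame, but the two load-bearing lemmas as you state them are respectively false in general and unprovable by the method you sketch.
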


	\section{$C^*$-uniqueness for cocycle-twisted groupoid convolution algebras}\label{sec:main-sec}
	We begin this section by presenting our main theorem. The remainder of the section will be dedicated to proving it. 
	
	Given a second-countable locally compact Hausdorff \'etale groupoid $\G$ and $\sigma \in Z^2 (\G,\T)$, denote the restriction of $\sigma$ to the fiber $\Isoint{\G}_x$ by $\sigma_x$. Note that $\sigma_x$ is continuous as $\Isoint{\G}_x$ is discrete, i.e.\ $\sigma_x \in Z^2 (\Isoint{\G}_x,\T)$. The following then constitutes our main theorem.
	
	\begin{thm}\label{thm:main-thm}
		Let $\G$ be a second-countable locally compact Hausdorff \'etale groupoid and $\sigma \in Z^2 (\G,\T)$. Suppose that $\G$ has the $\sigma$-weak containment property. Then 
		$\ell^1 (\G,\sigma)$ is $C^*$-unique if all the twisted convolution algebras $\ell^1 (\Isoint{\G}_x,\sigma_x)$, $x \in \Go$, are $C^*$-unique. 
	\end{thm}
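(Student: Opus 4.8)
The plan is to use the intersection property of \cref{prop:C*-uniqueness-intersection-property} throughout, reducing in two stages: from $\G$ to the group bundle $\Isoint{\G}$ via \cref{prop:twisted-injective-iff-injective-on-subalg}, and from $\Isoint{\G}$ to its fibers via the bundle decomposition of \cref{prop:nilsen-result}. The starting observation is that $C^*$-uniqueness of $\ell^1 (\Isoint{\G}_x,\sigma_x)$ forces $C^* (\Isoint{\G}_x,\sigma_x) = C^*_r (\Isoint{\G}_x,\sigma_x)$, since the reduced norm is a $C^*$-norm and must coincide with the universal one. Because $r = s$ on $\Isoint{\G}$, the inclusion $C_0 (\Go) \hookrightarrow M(C^*_r (\Isoint{\G},\sigma))$ is central and non-degenerate, so $C^*_r (\Isoint{\G},\sigma)$ (and likewise $C^* (\Isoint{\G},\sigma)$) is a $C_0 (\Go)$-algebra whose fibers are, via restriction of sections, the group algebras $C^*_r (\Isoint{\G}_x,\sigma_x)$ and $C^* (\Isoint{\G}_x,\sigma_x)$. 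As each fiberwise comparison map is then an isomorphism, and a fiberwise isomorphism of $C_0 (\Go)$-algebras is an isomorphism, $\Isoint{\G}$ has the $\sigma$-weak containment property; writing $A := C^* (\Isoint{\G},\sigma) = C^*_r (\Isoint{\G},\sigma)$, \cref{prop:nilsen-result} realizes $A \cong \Gamma_0 (\mathbf{B})$ over $\Go$ with fibers $B_x = C^* (\Isoint{\G}_x,\sigma_x)$.

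Next I would verify the intersection property for $\ell^1 (\Isoint{\G},\sigma)$. Given a nonzero closed ideal $K \vartriangleleft A$, each $q_x (K) = (K + I_x)/I_x$ is a closed ideal of $B_x$ (sums of closed ideals in a $C^*$-algebra are closed), and there is $x_0 \in \Go$ with $q_{x_0}(K) \neq \{0\}$. Feeding the ideal $q_{x_0}(K) \vartriangleleft B_{x_0} = C^* (\Isoint{\G}_{x_0},\sigma_{x_0})$ into \cref{prop:C*-uniqueness-intersection-property} for the $C^*$-unique algebra $\ell^1 (\Isoint{\G}_{x_0},\sigma_{x_0})$ yields a nonzero $b \in \ell^1 (\Isoint{\G}_{x_0},\sigma_{x_0}) \cap q_{x_0}(K)$; after replacing $b$ by $b^{*_\sigma} *_\sigma b$ we may assume $b(x_0) > 0$.

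It remains to promote $b$ to a nonzero element of $\ell^1 (\Isoint{\G},\sigma) \cap K$, which is the crux. Because $\Isoint{\G}$ is \'etale, I would first build a contractive lift $\tilde b \in \ell^1 (\Isoint{\G},\sigma)$ with $q_{x_0}(\tilde b) = b$ by spreading the coefficients of $b$ over pairwise disjoint open bisections through the points of $\Isoint{\G}_{x_0}$. Choosing $k \in K$ with $q_{x_0}(k) = b$, we have $\tilde b - k \in I_{x_0} = \overline{C_0 (\Go \setminus \{x_0\}) A}$; multiplying by bump functions $\chi_\delta \in C_c (\Go)$ with $\chi_\delta (x_0) = 1$ and shrinking support keeps $\chi_\delta \tilde b$ in $\ell^1 (\Isoint{\G},\sigma)$ and $\chi_\delta k$ in $K$, while $\Vert \chi_\delta (\tilde b - k)\Vert_A \to 0$ by upper semi-continuity of $x \mapsto \Vert q_x (\tilde b - k)\Vert$; here \cref{lemma:ell-1-cont-lemma} controls the $I$-norm fibre sums of $\chi_\delta \tilde b$ near $x_0$, where $q_{x_0}(\chi_\delta \tilde b) = b$ stays bounded away from zero. \emph{The main obstacle is precisely to turn this localization into an honest element of $\ell^1 (\Isoint{\G},\sigma) \cap K$:} since the $I$-norm is strictly finer than the $C^*$-norm of $A$, one cannot simply pass to a limit of the $\chi_\delta \tilde b$, and the delicate step is to extract a genuine $\ell^1$-section lying in the closed ideal $K$ rather than merely approximating it in $A$; I expect this to use the positivity $b(x_0) > 0$ together with the fibrewise ideal structure of $\mathbf{B}$ near $x_0$.

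Finally, granting that $\ell^1 (\Isoint{\G},\sigma)$ is $C^*$-unique, the theorem follows. For a nonzero closed ideal $I \vartriangleleft C^*_r (\G,\sigma) = C^* (\G,\sigma)$, the quotient $q \colon C^*_r (\G,\sigma) \to C^*_r (\G,\sigma)/I$ is non-injective, so by \cref{prop:twisted-injective-iff-injective-on-subalg} the composite $q \circ \iota_r$ is non-injective and $J := \ker (q \circ \iota_r)$ is a nonzero closed ideal of $C^*_r (\Isoint{\G},\sigma) = C^* (\Isoint{\G},\sigma)$. The intersection property for $\ell^1 (\Isoint{\G},\sigma)$ gives a nonzero $a \in \ell^1 (\Isoint{\G},\sigma) \cap J$; then $\iota (a) \in \ell^1 (\G,\sigma)$ is nonzero since $\iota$ is isometric, and $\iota (a) = \iota_r (a) \in \ker q = I$, so $\ell^1 (\G,\sigma) \cap I \neq \{0\}$. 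By \cref{prop:C*-uniqueness-intersection-property}, $\ell^1 (\G,\sigma)$ is $C^*$-unique.
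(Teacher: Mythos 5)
Your first and last reductions are sound: the passage from $\G$ to $\Isoint{\G}$ via \cref{prop:twisted-injective-iff-injective-on-subalg} and \cref{prop:C*-uniqueness-intersection-property} is exactly the paper's \cref{prop:twisted-Iso-C*unique-gives-G-C*unique}. But the middle of your argument has a genuine gap, and you name it yourself: the ``crux'' of promoting $b \in \ell^1 (\Isoint{\G}_{x_0},\sigma_{x_0}) \cap q_{x_0}(K)$ to a nonzero element of $\ell^1 (\Isoint{\G},\sigma) \cap K$ is left open, and the localization you sketch cannot close it. Your construction produces elements $\chi_\delta \tilde b \in \ell^1 (\Isoint{\G},\sigma)$ and $\chi_\delta k \in K$ with $\Vert \chi_\delta \tilde b - \chi_\delta k \Vert_A \to 0$, i.e.\ elements of $\ell^1$ whose distance to $K$ tends to zero in the $C^*$-norm; but since $\ell^1 (\Isoint{\G},\sigma)$ is not closed in $A$, a limit of the $\chi_\delta \tilde b$ has no reason to lie in $\ell^1$, and no compactness or positivity device is offered that would extract an honest element of $\ell^1 \cap K$. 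This is not a technicality one expects to patch: producing $\ell^1$-elements inside a given closed ideal is precisely the difficulty the theorem is about, and the paper's proof is organized so as to \emph{avoid it entirely}. Rather than verifying the intersection property for $\ell^1 (\Isoint{\G},\sigma)$ directly, the paper takes an arbitrary faithful $*$-representation $\pi$, shows $C^*_\pi (\Isoint{\G},\sigma)$ is a $C_0 (\Go)$-algebra (\cref{lemma:Cpi-is-C0-alg}), and proves via the isometric identification $\ell^1 (\Isoint{\G},\sigma)/I_x \cong \ell^1 (\Isoint{\G}_x,\sigma_x)$ of \cref{lemma:Banach-alg-fiber-iso} together with a Cohen--Hewitt factorization argument that each Nilsen fiber $B^\pi_x$ is a $C^*$-completion of $\ell^1 (\Isoint{\G}_x,\sigma_x)$; fiberwise $C^*$-uniqueness then forces $B^\pi_x \cong C^* (\Isoint{\G}_x,\sigma_x)$ independently of $\pi$, and \cref{prop:nilsen-result} upgrades this to $C^*_\pi (\Isoint{\G},\sigma) \cong C^* (\Isoint{\G},\sigma)$. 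No ideal is ever lifted.

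A secondary problem: your setup asserts that $C^*_r (\Isoint{\G},\sigma)$ is a $C_0 (\Go)$-algebra whose fibers (quotients by $I_x^r$) are $C^*_r (\Isoint{\G}_x,\sigma_x)$. For the reduced algebra of a group bundle this is false in general: the fiberwise reduced norm is only lower semi-continuous, while $C_0 (\Go)$-quotient fibers are upper semi-continuous, so the quotient $C^*_r (\Isoint{\G},\sigma)/I_x^r$ can be strictly larger than $C^*_r (\Isoint{\G}_x,\sigma_x)$ --- indeed this discrepancy is exactly the mechanism behind failure of weak containment for group bundles of Willett type \cite{Will}. Even the analogous statement for the full algebra, which is true, requires proof (the injectivity of $\ell^1 (\Isoint{\G}_x,\sigma_x) \to C^* (\Isoint{\G},\sigma)/I_x$ is the content of the paper's map $\Psi_x$, and is where \cref{lemma:Banach-alg-fiber-iso} and Cohen--Hewitt are actually used); your proposal takes it for granted. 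Note also that the paper never needs to establish $\sigma$-weak containment of $\Isoint{\G}$ as a separate intermediate step, since in \cref{prop:twisted-Iso-C*unique-gives-G-C*unique} it comes for free from the assumed $C^*$-uniqueness of $\ell^1 (\Isoint{\G},\sigma)$. To repair your proof you would either have to solve the lifting problem you flagged, or abandon the ideal-intersection route at the level of $\Isoint{\G}$ and argue, as the paper does, that \emph{every} $C^*$-completion has the same Nilsen bundle.
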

	
	As a first step towards proving \cref{thm:main-thm} we relate $C^*$-uniqueness of $\ell^1 (\G,\sigma)$ to $C^*$-uniqueness of $\ell^1 (\Isoint{\G},\sigma)$.
	
	\begin{prop}\label{prop:twisted-Iso-C*unique-gives-G-C*unique} Suppose $\G$ is a second-countable locally compact  Hausdorff \'etale groupoid with the $\sigma$-weak containment property for $\sigma \in Z^2 (\G,\T)$. 
	If $\ell^1 (\Isoint{\G},\sigma)$ is $C^*$-unique, then $\ell^1 (\G,\sigma)$ is $C^*$-unique.
	\end{prop}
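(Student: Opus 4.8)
The plan is to verify the ideal-intersection criterion of \cref{prop:C*-uniqueness-intersection-property} for $\ell^1(\G,\sigma)$, using the weak containment hypothesis to pass to the reduced picture and then using \cref{prop:twisted-injective-iff-injective-on-subalg} to transport nontriviality of ideals down to the isotropy subalgebra, where the $C^*$-uniqueness assumption can be exploited.

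First, since $\G$ has the $\sigma$-weak containment property, $\lambda \colon C^* (\G,\sigma) \to C^*_r (\G,\sigma)$ is an isomorphism. It carries $\ell^1 (\G,\sigma)$, sitting inside $C^* (\G,\sigma)$, onto $\ell^1 (\G,\sigma)$ sitting faithfully inside $C^*_r (\G,\sigma)$ via the left regular representation, and it is a bijection on closed two-sided ideals. Consequently, to establish the criterion of \cref{prop:C*-uniqueness-intersection-property} it is enough to show that every nonzero closed two-sided ideal $I \vartriangleleft C^*_r (\G,\sigma)$ satisfies $\ell^1 (\G,\sigma) \cap I \neq \{0\}$. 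I would fix such an $I$ and let $\pi \colon C^*_r (\G,\sigma) \to C^*_r (\G,\sigma)/I$ be the quotient map, which fails to be injective precisely because $I \neq \{0\}$.

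Next, I would apply \cref{prop:twisted-injective-iff-injective-on-subalg} with $A = C^*_r (\G,\sigma)/I$ and $\Psi = \pi$: since $\pi$ is not injective, neither is $\pi \circ \iota_r \colon C^*_r (\Isoint{\G},\sigma) \to C^*_r (\G,\sigma)/I$, so $J := \ker (\pi \circ \iota_r)$ is a nonzero closed two-sided ideal of $C^*_r (\Isoint{\G},\sigma)$. The hypothesis that $\ell^1 (\Isoint{\G},\sigma)$ is $C^*$-unique forces the natural surjection $C^* (\Isoint{\G},\sigma) \to C^*_r (\Isoint{\G},\sigma)$ to be an isomorphism, so these two completions coincide; hence \cref{prop:C*-uniqueness-intersection-property} applies to $\ell^1 (\Isoint{\G},\sigma)$ and produces a nonzero $g \in \ell^1 (\Isoint{\G},\sigma) \cap J$. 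By construction $\iota_r (g) \in I$. Finally I would transport $g$ back to $\G$: the extension-by-zero $\iota (g) \in \ell^1 (\G,\sigma)$ is nonzero because $\iota$ is $\ell^1$-isometric, and it remains nonzero in $C^*_r (\G,\sigma)$ since $\ell^1 (\G,\sigma)$ embeds faithfully there. The point is that under the identifications above $\iota (g)$ and $\iota_r (g)$ represent the same element of $C^*_r (\G,\sigma)$, that is $\lambda (\iota (g)) = \iota_r (g) \in I$, whence $\iota (g) \in \ell^1 (\G,\sigma) \cap I$ is nonzero, as required.

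The main obstacle I anticipate is precisely this last bookkeeping step: ensuring that the element produced inside the reduced isotropy algebra $C^*_r (\Isoint{\G},\sigma)$ genuinely arises from an honest nonzero element of the Banach $*$-algebra $\ell^1 (\G,\sigma)$, and that its image in $C^*_r (\G,\sigma)$ equals $\iota_r (g)$. This rests on the compatibility $\lambda \circ \iota = \iota_r$ on $\ell^1 (\Isoint{\G},\sigma)$, which I would justify by noting that on the common dense subalgebra $C_c (\Isoint{\G},\sigma)$ both $\iota$ and $\iota_r$ are the same extension-by-zero map, and that the $I$-norm dominates the reduced norm so that the identity persists after completion; this is combined with the two isomorphisms furnished by the weak containment property and by the $C^*$-uniqueness of $\ell^1 (\Isoint{\G},\sigma)$. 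Everything else is a formal application of the ideal-intersection criterion and \cref{prop:twisted-injective-iff-injective-on-subalg}.
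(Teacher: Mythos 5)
Your proposal is correct and follows essentially the same route as the paper's proof: pass to the reduced algebra via the $\sigma$-weak containment property, apply \cref{prop:twisted-injective-iff-injective-on-subalg} to the quotient map to get a nonzero ideal in $C^*_r(\Isoint{\G},\sigma)=C^*(\Isoint{\G},\sigma)$, and then use \cref{prop:C*-uniqueness-intersection-property} twice. The only difference is cosmetic --- you work with $\ker(\pi\circ\iota_r)$ and explicitly verify the compatibility $\lambda\circ\iota=\iota_r$, whereas the paper identifies $C^*(\Isoint{\G},\sigma)$ with its image and takes the intersection $J\cap C^*(\Isoint{\G},\sigma)$ directly; your extra bookkeeping is sound and, if anything, makes the implicit identifications more transparent.
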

	\begin{proof}
		Suppose $\ell^1 (\Isoint{\G},\sigma)$ is $C^*$-unique. Then in particular  $C^*(\Isoint{\G},\sigma)=C^*_r(\Isoint{\G},\sigma)$. Let $\{0\} \neq J \vartriangleleft C^* (\G,\sigma)=C^*_r (\G,\sigma)$ be a closed two-sided ideal. By \cref{prop:C*-uniqueness-intersection-property} it suffices to show that $J\cap \ell^1 (\G,\sigma) \neq \{0\}$. By \cref{prop:twisted-injective-iff-injective-on-subalg} we have $C^* (\Isoint{\G},\sigma)\cap J \neq \{0\}$ as the $*$-homomorphism $C^* (\G,\sigma) \to C^*(\G,\sigma)/J$ is not injective. Now define $I := J \cap C^* (\Isoint{\G},\sigma)$. It is straightforward to verify that $I$ is a two-sided ideal in $C^* (\Isoint{\G},\sigma)$, and as both $J$ and $C^* (\Isoint{\G},\sigma)$ are closed in $C^* (\G,\sigma)$, $I$ is also closed in $C^* (\Isoint{\G},\sigma)$. By $C^*$-uniqueness of $\ell^1 (\Isoint{\G},\sigma)$ it then follows that $I \cap \ell^1 (\Isoint{\G},\sigma) \neq \{0\}$. From this we get
		\begin{equation*}
		\{0\} \neq I \cap \ell^1 (\Isoint{\G},\sigma) = J \cap \ell^1 (\Isoint{\G},\sigma) \subset J \cap \ell^1 (\G,\sigma),
		\end{equation*}
		from which we deduce by \cref{prop:C*-uniqueness-intersection-property} that $\ell^1 (\G,\sigma)$ is $C^*$-unique. 
	\end{proof}
	
	Having related the question of $C^*$-uniqueness of $\ell^1 (\G,\sigma)$ to a question regarding $C^*$-uniqueness of $\ell^1 (\Isoint{\G},\sigma)$, we proceed to further relate this to $C^*$-uniqueness of $\ell^1 (\Isoint{\G}_x,\sigma_x)$ for $x\in \Go$. To do this we will show that for any $*$-representation $\pi \colon \ell^1 (\Isoint{\G},\sigma) \to B(\mathcal{H})$, the resulting $C^*$-algebra $C^*_\pi (\Isoint{\G},\sigma)$ is a $C_0 (\Go)$-algebra. This is the content of \cref{lemma:Cpi-is-C0-alg}. However, we first do some preparatory work.
	
	First observe that there exists a $*$-homomorphism $\phi:C_0(\Go)\to \mathcal{Z}(\ell^1(\Isoint{\G},\sigma))$, the latter meaning the center of $\ell^1(\Isoint{\G},\sigma)$. Indeed, as $\Go$ is open in $\Isoint{\G}$, we may take $\phi$ to be the inclusion where we extend functions in $C_0 (\Go)$ by zero. The map $\phi$ is clearly isometric. 
	As $\phi$ can be viewed as an inclusion, we omit writing it from now on to ease notation. Then given $g\in C_0(\Go)$ and $f\in C_c(\Isoint{\G},\sigma)$ we have that 
	
	\begin{equation*}
	\begin{split}
	(g*_\sigma f)(\gamma) & = g(r(\gamma))f(\gamma)\sigma(r(\gamma),\gamma)=g(r(\gamma))f(\gamma)\\
	& =f(\gamma)g(s(\gamma))\sigma(\gamma, s(\gamma))=(f*_\sigma g)(\gamma)\,,
	\end{split}
	\end{equation*}
	for every $\gamma\in \Isoint{\G}$. The resulting action of $C_0 (\Go)$ on $\ell^1(\Isoint{\G},\sigma)$ can then be viewed as pointwise multiplication in the fibers of $\Go$. By continuity we can extend $\phi$ to a continuous $*$-homomorphism from $C_0 (\Go)$ to $\mathcal{Z}(\ell^1(\Isoint{\G},\sigma))$. 
	Let $\pi:\ell^1 (\Isoint{\G},\sigma) \to B(\mathcal{H})$ be a $*$-representation and let $C^*_\pi(\Isoint{\G},\sigma)$ denote the completion 
	in the operator norm of $B(\mathcal{H})$. Define the map 
	$\iota:=\pi\circ \phi:C_0(\Go)\to \pi(\mathcal{Z}(\ell^1(\Isoint{\G},\sigma)))$. We have that $$\pi(\mathcal{Z}(\ell^1(\Isoint{\G},\sigma)))=\mathcal{Z}(\pi(\ell^1(\Isoint{\G},\sigma)))\subseteq \mathcal{Z}(M(C^*_\pi(\Isoint{\G},\sigma)))\,.$$
	The following is then immediate.
	\begin{lemma}\label{lemma:Cpi-is-C0-alg}
		Let $\G$ be a second-countable locally compact Hausdorff \'etale groupoid and $\sigma \in Z^2 (\G,\T)$. Let $\pi$ be a $*$-representation of $\ell^1(\Isoint{\G},\sigma)$. Then $C^*_\pi(\Isoint{\G},\sigma)$ is a $C_0(\Go)$-algebra.
	\end{lemma}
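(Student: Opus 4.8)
The plan is to verify directly the defining conditions of a $C_0(\Go)$-algebra for the map $\iota = \pi \circ \phi$ already constructed above. Two of these come essentially for free from the preparatory discussion: $\iota$ is a $*$-homomorphism, being the composite of the $*$-homomorphisms $\phi$ and $\pi$, and its image sits inside $\mathcal{Z}(M(C^*_\pi(\Isoint{\G},\sigma)))$, which is exactly the content of the displayed inclusion $\pi(\mathcal{Z}(\ell^1(\Isoint{\G},\sigma))) = \mathcal{Z}(\pi(\ell^1(\Isoint{\G},\sigma))) \subseteq \mathcal{Z}(M(C^*_\pi(\Isoint{\G},\sigma)))$. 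So the real work is to check that $\iota$ is non-degenerate and injective.

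For non-degeneracy I would argue with an approximate unit. Picking a net $(e_\lambda)_\lambda$ of compactly supported functions in $C_0(\Go)$ with $0 \le e_\lambda \le 1$ forming an approximate identity, the computation $(\phi(g) *_\sigma f)(\gamma) = g(r(\gamma)) f(\gamma)$ from just before the lemma shows that for $f \in C_c(\Isoint{\G},\sigma)$ one has $\phi(e_\lambda) *_\sigma f = f$ as soon as $e_\lambda \equiv 1$ on the compact set $r(\supp f)$. Applying $\pi$ and using its continuity gives $\iota(e_\lambda)\pi(f) \to \pi(f)$, and density of $\pi(C_c(\Isoint{\G},\sigma))$ in $C^*_\pi(\Isoint{\G},\sigma)$ then yields $\overline{\iota(C_0(\Go))\, C^*_\pi(\Isoint{\G},\sigma)} = C^*_\pi(\Isoint{\G},\sigma)$, which is non-degeneracy.

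The one point that requires genuine attention, and which I expect to be the main obstacle, is injectivity of $\iota$. The embedding $\phi$ is isometric, since for $g$ supported on the unit space one computes $\|\phi(g)\|_I = \|g\|_\infty$, so $\phi$ is injective on the nose; hence injectivity of $\iota = \pi \circ \phi$ comes down to faithfulness of $\pi$ on $\phi(C_0(\Go))$. For the representations that matter here, namely those realizing $C^*$-norms on $\ell^1(\Isoint{\G},\sigma)$, the representation $\pi$ is faithful, so $\iota$ is injective and the argument closes. I would make this faithfulness explicit, since for a general, possibly degenerate, representation $\iota$ could collapse part of $C_0(\Go)$ and only exhibit $C^*_\pi(\Isoint{\G},\sigma)$ as a $C_0(Y)$-algebra for some closed $Y \subseteq \Go$. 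With injectivity secured, all the defining conditions hold and $C^*_\pi(\Isoint{\G},\sigma)$ is a $C_0(\Go)$-algebra.
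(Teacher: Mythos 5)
Your proposal is correct and takes essentially the same route as the paper, which simply declares the lemma ``immediate'' after the same preparations (the centrality computation, the isometric inclusion $\phi$, and the displayed containment $\pi(\mathcal{Z}(\ell^1(\Isoint{\G},\sigma)))\subseteq \mathcal{Z}(M(C^*_\pi(\Isoint{\G},\sigma)))$); your approximate-unit verification of non-degeneracy merely spells out what the paper leaves implicit. Your injectivity caveat is a legitimate and careful observation: under the paper's definition of a $C_0(\Go)$-algebra, which demands an \emph{injection} into $\mathcal{Z}(M(A))$, the lemma as stated requires $\pi$ to be faithful on $\phi(C_0(\Go))$ --- harmless in practice, since in the proof of \cref{thm:main-thm} the lemma is only invoked for faithful $\pi$, and Nilsen's bundle theorem in any case tolerates a non-injective structure map at the cost of zero fibers.
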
 
	
	Now fix $x \in \Go$ and denote by $J_x=C_0 (\Go \setminus \{x\})$ the space of continuous functions of $\Go$ vanishing at $x$. As $C_0 (\Go)$ is central in $\ell^1 (\Isoint{\G},\sigma)$ and $J_x$ is a closed two-sided ideal of $C_0 (\Go)$, the space $I_x := J_x \cdot \ell^1 (\Isoint{\G},\sigma)$ is a closed two-sided ideal in $\ell^1 (\Isoint{\G},\sigma)$. Recall that we denote by $\sigma_x$ the restriction of $\sigma$ to the fiber $\Isoint{\G}_x$. We then have the following result. 
	
	\begin{lemma}\label{lemma:Banach-alg-fiber-iso} Let $\G$ be a second-countable locally compact Hausdorff \'etale groupoid and let $\sigma \in Z^2 (\G,\T)$.
		For every $x\in \Go$ 
		the map $\psi_x \colon \ell^1 (\Isoint{\G}, \sigma) \to \ell^1 (\Isoint{\G}_x,\sigma_x)$ given by restriction of functions is a continuous $*$-homomorphism inducing an isometric $*$-isomorphism between $\ell^1 (\Isoint{\G},\sigma)/I_x$ and $\ell^1 (\Isoint{\G}_x, \sigma_x)$.
	\end{lemma}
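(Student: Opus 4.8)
\emph{Plan.} The map $\psi_x$ is restriction of functions to the fibre $\Isoint{\G}_x$, which is precisely the isotropy group $(\Isoint{\G})^x_x$, since every $\gamma \in \Isoint{\G}$ satisfies $s(\gamma) = r(\gamma)$; in particular $\Isoint{\G}_x = \Isoint{\G}^x$. First I would verify the algebraic assertions on $C_c(\Isoint{\G},\sigma)$. Because $\Isoint{\G}$ is an open étale subgroupoid and $s(\gamma) = r(\gamma) = x$ for $\gamma \in \Isoint{\G}_x$, the convolution sum in \eqref{eq:twisted-conv} for such $\gamma$ ranges over $\mu \in \Isoint{\G}_x$ with $\gamma\mu^{-1} \in \Isoint{\G}_x$, so it collapses to the $\sigma_x$-twisted convolution on the group $\Isoint{\G}_x$; likewise \eqref{eq:twisted-inv} restricts to the $\sigma_x$-involution. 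Thus $\psi_x$ is a $*$-homomorphism into $C_c(\Isoint{\G}_x,\sigma_x)$. Contractivity is immediate from $\|\psi_x(f)\|_1 = \sum_{\gamma\in\Isoint{\G}_x}|f(\gamma)| \le \|f\|_I$, so $\psi_x$ extends to a norm-decreasing $*$-homomorphism on $\ell^1(\Isoint{\G},\sigma)$.

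The core of the argument is the norm identity $\operatorname{dist}(f, I_x) = \|\psi_x(f)\|_1$ for all $f$. The inclusion $I_x \subseteq \ker\psi_x$ is the easy half: for $g \in J_x$ the computation preceding \cref{lemma:Cpi-is-C0-alg} gives $(g *_\sigma f)(\gamma) = g(r(\gamma)) f(\gamma) = g(x) f(\gamma) = 0$ when $\gamma \in \Isoint{\G}_x$, and since $\psi_x$ is contractive and annihilates $I_x$ we obtain $\|\psi_x(f)\|_1 = \|\psi_x(f - h)\|_1 \le \|f - h\|_I$ for every $h \in I_x$, whence $\|\psi_x(f)\|_1 \le \operatorname{dist}(f, I_x)$.

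For the reverse inequality, which I expect to be the main obstacle, I would first treat $f \in C_c(\Isoint{\G},\sigma)$ and localize near $x$. Given $\varepsilon > 0$, I use the continuity of $F(y) := \max\{\sum_{\gamma\in\Isoint{\G}_y}|f(\gamma)|, \sum_{\gamma\in\Isoint{\G}^y}|f(\gamma)|\}$ from \cref{lemma:ell-1-cont-lemma} (applied to the étale groupoid $\Isoint{\G}$) to pick a neighbourhood $W$ of $x$ with $F(y) < F(x) + \varepsilon$ on $W$, and choose $v \in C_c(\Go)$ with $0 \le v \le 1$, $v(x) = 1$, and $\supp v \subseteq W$. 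Multiplying by a function $w\in C_c(\Go)$ equal to $1$ on $r(\supp f)$ gives $(1-v)f = ((1-v)w)*_\sigma f \in J_x \cdot \ell^1(\Isoint{\G},\sigma) = I_x$, so $\operatorname{dist}(f, I_x) \le \|vf\|_I$. Since $r(\gamma) = s(\gamma) = y$ for $\gamma$ in $\Isoint{\G}_y$ or $\Isoint{\G}^y$, the central action is pointwise with $(vf)(\gamma) = v(r(\gamma)) f(\gamma)$, and one computes $\|vf\|_I = \sup_y v(y) F(y) \le F(x) + \varepsilon$. Finally, $\Isoint{\G}_x = \Isoint{\G}^x$ forces $F(x) = \sum_{\gamma\in\Isoint{\G}_x}|f(\gamma)| = \|\psi_x(f)\|_1$, so $\operatorname{dist}(f, I_x) \le \|\psi_x(f)\|_1 + \varepsilon$; letting $\varepsilon \to 0$, and then using density of $C_c(\Isoint{\G},\sigma)$ together with the $1$-Lipschitz dependence of both sides on $f$, yields the identity for all $f \in \ell^1(\Isoint{\G},\sigma)$.

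It remains to assemble the conclusion. The identity shows $\ker\psi_x = \{f : \operatorname{dist}(f,I_x) = 0\} = I_x$, so $\psi_x$ descends to an injective isometric map $\overline{\psi}_x \colon \ell^1(\Isoint{\G},\sigma)/I_x \to \ell^1(\Isoint{\G}_x,\sigma_x)$. For surjectivity I would produce the finitely supported functions in the range: for $\gamma \in \Isoint{\G}_x$ pick an open bisection $U \subseteq \Isoint{\G}$ through $\gamma$ and $g \in C_c(\Isoint{\G})$ with $g(\gamma) = 1$ and $\supp g \subseteq U$; since a bisection meets the fibre $\Isoint{\G}_x$ in at most one point, $\psi_x(g) = \delta_\gamma$. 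Hence $C_c(\Isoint{\G}_x,\sigma_x) \subseteq \range \overline{\psi}_x$, and as $\overline{\psi}_x$ is isometric its range is closed and therefore equal to $\ell^1(\Isoint{\G}_x,\sigma_x)$. Thus $\overline{\psi}_x$ is an isometric $*$-isomorphism, as claimed.
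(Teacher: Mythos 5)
Your proof is correct, and although it rests on the same two pillars as the paper's argument---contractivity of $\psi_x$ and the continuity statement of \cref{lemma:ell-1-cont-lemma} (legitimately applied to $\Isoint{\G}$, which is itself a second-countable locally compact Hausdorff \'etale groupoid with unit space $\Go$)---it is organized around a genuinely different pivot: the single distance formula $\operatorname{dist}(f,I_x)=\Vert \psi_x(f)\Vert_1$, from which both $\ker\psi_x=I_x$ (as $I_x$ is closed) and the isometry of the induced quotient map fall out simultaneously. The paper proves these two facts separately: it shows $\ker\psi_x\subseteq I_x$ by approximating $f\in\ker\psi_x$ by functions $f_n\in C_c(\Isoint{\G},\sigma)\cap\ker\psi_x$ and factoring \emph{exactly} $f_n=g_n*_\sigma f_n$ with $g_n\in J_x$ built from a partition of unity, and it then establishes the isometry via a countable partition of unity of $\Go\setminus\{x\}$ and the exhaustion sets $U_n$. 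Your route buys real robustness here: the paper's exact factorization needs $g_n\in C_0(\Go\setminus\{x\})$ to equal $1$ on all of $r(\supp f_n)$, which is delicate when $x\in r(\supp f_n)$ (membership in $\ker\psi_x$ forces $f_n$ to vanish on the fiber $\Isoint{\G}_x$, not its support to stay away from it), and it also presupposes, without proof, that $C_c(\Isoint{\G},\sigma)\cap\ker\psi_x$ is dense in $\ker\psi_x$; your $\varepsilon$-localized bump $v$, justified by continuity of $F$ at $x$, sidesteps both issues, and since both sides of your identity are $1$-Lipschitz in $f$ the passage from $C_c$ to all of $\ell^1$ is automatic. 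Finally, for surjectivity the paper simply invokes Tietze's extension theorem, which by itself only places (restrictions of) compactly supported functions, hence the finitely supported functions, in the range; your bisection bump functions achieve the same, and you close the argument more completely by observing that the induced map, being isometric on the Banach quotient, has closed range containing the dense subspace of finitely supported functions in $\ell^1(\Isoint{\G}_x,\sigma_x)$.
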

	
	\begin{proof}
		For $f\in C_c(\Isoint{\G},\sigma)$ we have
		\begin{equation*}
		\Vert \psi_x (f) \Vert_{ \ell^1 (\Isoint{\G}_x)} = \sum_{\gamma \in \Isoint{\G}_x} \vert f (\gamma)\vert \leq \sup_{y \in \Go} \sum_{\mu \in \Isoint{\G}_y} \vert f (\mu)\vert = \Vert f \Vert_I
		\end{equation*}
		for all $f \in C_c(\Isoint{\G},\sigma)$.
		Thus $\psi_x$ is a $I$-norm decreasing map, so it extends to a continuous $*$-homomorphism $\psi_x:\ell^1(\Isoint{\G},\sigma) \to \ell^1(\Isoint{\G}_x,\sigma_x)$. It is surjective by Tietze's extension theorem. 
		
		Next we want to show that $\ker \psi_x=I_x$. First observe that given $g\in C_0(\Go)$ and $h\in C_c(\G,\sigma)$ we have that 
		\begin{equation*}
		\begin{split}
		\psi_x (g *_\sigma h) (\gamma) &= (\psi_x(g)*_\sigma \psi_x(h)) (\gamma) = \sum_{\mu \in \Isoint{\G}_x} g (\mu) h (\mu^{-1}\gamma)\sigma(\mu,\mu^{-1}\gamma)  \\
		&  = g (x) h (x\gamma)\sigma(x,\gamma) =g(x)h(\gamma)\,,
		\end{split}
		\end{equation*}
		for every $\gamma\in \Isoint{\G}_x$.
		
		Now let $f \in I_x$. We may then assume that $f$ is the norm limit of elements $f_n$ of the form  $f_n = \sum_{i=1}^n g_i *_\sigma h_i$, where $g_i \in J_x$ and $h_i \in C_c (\Isoint{\G},\sigma)$ for all $i \in \N$. It suffices to prove that $\psi_{x}(g_i *_\sigma h_i)=0$ for all $i \in \N$. For any $\gamma \in \Isoint{\G}_x$ we then have $\psi_x (g_i *_\sigma h_i) (\gamma)=g_i(x)h_i(\gamma)=0$ since $g_i(x)=0$. Then it follows that $\psi_x (f_n) = 0$ for every $n\in \N$, and by continuity $\psi_x(f)=0$. Thus,  $I_x \subset \ker \psi_x$.
		
		Conversely, suppose $f \in \ker \psi$. Then  $f=\lim f_n$ for some $f_n\in C_c(\G,\sigma)\cap \ker \psi_x$, and hence $f_n(x)=0$ for every $n\in \N$. Let $\{\rho_\lambda\}_{\lambda\in \Lambda}\subset C_0(\Go\setminus \{x\})$ be a partition of the unit of $\Go\setminus \{x\}$. Then  given $n\in \N$ there exists a finite subset $\Lambda_n$ of $\Lambda$, such that $g_n:=\sum_{\lambda\in \Lambda_n} \rho_n\in  C_0(\Go\setminus \{x\})=J_x$ and $g_n(y)=1$ for every $y\in r(\supp(f_n))=s(\supp(f_n))$, and hence $$f_n(\gamma)=g_n(r(\gamma))f_n(\gamma)\sigma(r(\gamma),\gamma)=(g_n*_\sigma f_n)(\gamma)$$ for every $\gamma \in \G$. Therefore we have that
		\begin{equation*}
		f = \lim_{n\to \infty} f_n= \lim_{n\to \infty} (g_n*_\sigma f_n)  \in  \overline{J_x \cdot \ell^1 (\Isoint{\G},\sigma)} = I_x, 
		\end{equation*}
		as we wanted. We would like to see that the isomorphism $\ell^1 (\Isoint{\G},\sigma)/I_x \cong \ell^1 (\Isoint{\G}_x,\sigma_x)$ is isometric. To do that, it is enough to check that 
		$$\inf \{\| f+h \|: h\in C_0(\Go\setminus \{x\})\cdot C_c(\G,\sigma)\}=\| \psi_x(f)\|$$
		for every $f\in C_c(\G,\sigma)$. Observe that by continuity of $\psi_x$ we have
		$\| f+h \|\geq \| \psi_x(f)\|$ for every $h\in C_0(\Go\setminus \{x\})\cdot C_c(\G,\sigma)$. 
		As $\G$ is second-countable locally compact Hausdorff, so is $\Go \setminus \{x\}$. Hence it is paracompact, and we can guarantee that there is a countable partition of unity $\{\rho_i \}_{i=1}^{\infty}$ for $\Go \setminus \{x\}$. 
		For $n\in \N$ let $U_n :=\Go\setminus \bigcup_{i=1}^n\supp(\rho_i)$. Then we have
		$$\|f-(\sum_{i=0}^n\rho_i)f\| \leq \max_{y\in U_n}\|\psi_y(f)\|\,.$$
		By \cref{lemma:ell-1-cont-lemma} the assignment $\G^{(0)} \ni x \mapsto \max\{\sum_{\gamma\in \G_x} |f(\gamma)|, \sum_{\gamma \in \G^x} |f(\gamma)|  \}$ is continuous. 
		It follows that for every $\varepsilon>0$ there exists $n$ such that $|\|\psi_y(f)\|-\|\psi_x(f)\||<\varepsilon$ for every $y\in U_n$. As $U_k \supset U_{k-1}$ for all $k$, it follows that $\|f-(\sum_{i=0}^k\rho_i)f\| \leq \Vert \psi_x (f)\Vert + \varepsilon$ for all $k \geq n$. As $\varepsilon$ was arbitrary, this finishes the proof.
	\end{proof}
	
	We may finally prove \cref{thm:main-thm}.
	
	\begin{proof}[Proof of \cref{thm:main-thm}]
		By \cref{prop:twisted-Iso-C*unique-gives-G-C*unique} it suffices to show that the condition implies that $\ell^1 (\Isoint{\G},\sigma)$ is $C^*$-unique. As above, denote by $J_x=C_0 (\Go \setminus \{x\})$ and by $I_x := \overline{J_x \cdot \ell^1 (\Isoint{\G},\sigma)}$ the resulting closed two-sided ideal in $\ell^1 (\Isoint{\G},\sigma)$. 
		Let $\pi \colon \ell^1 (\Isoint{\G},\sigma) \to B(\mathcal{H})$ be a faithful $*$-representation and denote by $C^*_\pi (\Isoint{\G},\sigma)$ the completion of $\pi(\ell^1 (\Isoint{\G},\sigma))$. Moreover, let $I_x^\pi$ denote the closure of $\pi(I_x)$ in $C^*_\pi (\Isoint{\G},\sigma)$. By \cref{prop:nilsen-result} and \cref{lemma:Cpi-is-C0-alg} there is an isomorphism $C^*_\pi (\Isoint{\G},\sigma)\cong \Gamma_0 (\mathbf{B}^\pi)$, where the fibers $B^\pi_x$, $x\in \Go$, are given by
		\begin{equation*}
		B^\pi_x = C^*_\pi (\Isoint{\G},\sigma)/I^\pi_x.
		\end{equation*}
		We will show that there is an injective $*$-homomorphism
		\begin{equation*}
		\Psi_x \colon \ell^1 (\Isoint{\G}_x,\sigma_x) \to B^\pi_x
		\end{equation*}
		for every $x\in \Go$. To do this, fix $x \in \Go$. 
		First, we show that the composition
		\begin{equation}
		\begin{split}
		\ell^1 (\Isoint{\G}_x,\sigma_x) \cong \ell^1 (\Isoint{\G},\sigma)/I_x \to  C^*_\pi(\Isoint{\G},\sigma) /I_x^\pi \cong B_x^\pi
		\end{split}
		\end{equation}
		given by first applying the isomorphism of \cref{lemma:Banach-alg-fiber-iso} and then applying the map $f + I_x \mapsto f + I_x^\pi$ for $f\in \ell^1 (\Isoint{\G},\sigma)$ is a well-defined continuous $*$-homomorphism. 
		This is our candidate for the map $\Psi_x$. Denote by $I^\pi_x$ also the image of the ideal $I^\pi_x \vartriangleleft C^*_\pi (\Isoint{\G},\sigma)$ in $\Gamma_0 (\mathbf{B}^\pi)$. It then suffices to show that if $F \in I^\pi_x$, then $F(x) = 0$. 
		
		To see this, note that we can let $C_0 (\Go\setminus \{x\})$ act on $C^*_\pi(\Isoint{\G}, \sigma)$ by pointwise multiplication to obtain a have a continuous $*$-homomorphism
		\begin{equation*}
		C_0 (\Go \setminus \{x\}) = J_x\to \mathcal{Z}(M(C^*_\pi(\Isoint{\G}, \sigma))),
		\end{equation*}
		which leaves $I^\pi_x$ invariant, and as a result $I^\pi_x$ becomes a Banach $J_x$-module. It is even non-degenerate as
		\begin{equation*}
		\begin{split}
		\overline{J_x I^\pi_x} = \overline{J_x \overline{J_x C^*_\pi(\Isoint{\G},\sigma)}} \supset \overline{J_x J_x C^*_\pi(\Isoint{\G},\sigma)} = \overline{J_x C^*_\pi(\Isoint{\G},\sigma)} = I^\pi_x,
		\end{split}
		\end{equation*}
		since $J_x$, being a $C^*$-algebra, has an approximate identity. It then follows by Cohen-Hewitt factorization that if $F \in I^\pi_x$, then $F = f\cdot H$, where $f \in J_x$ and $H \in I^\pi_x$. Then $F(x) = f(x) H(x) = 0$, and the map $\Psi_x$ is a well-defined $*$-homomorphism.
		
		As $\ell^1 (\Isoint{\G}, \sigma)$ is dense in its $C^*$-completion $C^* (\Isoint{\G},\sigma)$, it follows that the image of $\Psi_x$ is dense.
		
		Lastly, if $\Psi_x (f)=0$, then $\Psi_x (f) \in I^\pi_x$, and so $f\vert_{\Isoint{\G}_x}=0$ by the above argument. 
		Thus $\psi_x$ is injective. Hence we have a continuous dense embedding
		\begin{equation}
		\Psi_x\colon \ell^1 (\Isoint{\G}_x,\sigma_x) \hookrightarrow C^*_\pi (\Isoint{\G},\sigma)/J^\pi_x.
		\end{equation}
		Now $C^*_\pi (\Isoint{\G},\sigma)/J^\pi_x$ becomes a $C^*$-completion of $\ell^1 (\Isoint{\G}_x,\sigma_x)$. 
		Since $\pi$ is an arbitrary faithful $*$-representation of $\ell^1 (\Isoint{\G},\sigma)$, we deduce that this holds for all faithful $*$-representations. But as $\ell^1 (\Isoint{\G}_x,\sigma_x)$ is assumed $C^*$-unique, we may then deduce
		\begin{equation}\label{eq:C*-fiber-isos}
		C^*_\pi (\Isoint{\G},\sigma)/J^\pi_x \cong C^* (\Isoint{\G},\sigma)/J^{\mathrm{full}}_x,
		\end{equation}
		where $C^* (\Isoint{\G},\sigma)$ and $J^{\mathrm{full}}_x$ denotes the completions in the maximal $C^*$-norm. 
		As $x\in \Go$ was arbitrary, we deduce that this holds for all $x \in \Go$. Now let $B^{\mathrm{full}}_x = C^* (\Isoint{\G},\sigma)/J^{\mathrm{full}}_x$. By \cref{prop:nilsen-result} and \eqref{eq:C*-fiber-isos} we then have
		\begin{equation}
		C^*_\pi (\Isoint{\G},\sigma) \cong \Gamma_0 (\mathbf{B}^\pi) \cong \Gamma_0 (\mathbf{B}^{\mathrm{full}}) \cong C^*(\Isoint{\G},\sigma).
		\end{equation}
		From this we deduce that $\ell^1 (\Isoint{\G},\sigma)$, and hence also $\ell^1 (\G,\sigma)$, is $C^*$-unique. 
	\end{proof}

	\section{Examples}\label{sec:Examples}
	In this section we present some (classes of) examples of $C^*$-unique groupoids. Due to the nature of our main result, \cref{thm:main-thm}, our examples will draw upon previously proved results on $C^*$-uniqueness of locally compact groups. We begin with a class of examples in the case of trivial cocycle twists. 
	
	\begin{exmp}[The untwisted case]\label{exmp:untwisted-case}
		If we consider a second-countable locally compact Hausdorff \'etale groupoid $\G$ with the trivial $2$-cocycle $\sigma = 1$, then $C^*$-uniqueness of $\ell^1 (\G,1)=\ell^1 (\G)$ can by \cref{thm:main-thm} be deduced by $C^*$-uniqueness of the Banach $*$-algebras $\ell^1 (\Isoint{\G}_x,\sigma_x) = \ell^1 (\Isoint{\G}_x)$ for $x \in \Go$. $C^*$-uniqueness of untwisted convolution algebras has been studied before, and it is known that for a locally compact group $G$, the Banach $*$-algebra $\ell^1 (G)$ is $C^*$-unique if $G$ is a semidirect product of abelian groups, or a group where every compactly generated subgroup is of polynomial growth \cite[p.\ 224]{Boidol84}. Hence if for every $x\in \Go$ the discrete group $\Isoint{\G}_x$ is of one of these types, $\ell^1 (\G)$ will be $C^*$-unique.
	\end{exmp} 
	
	In the case of locally compact groups it is well-known that amenability of the group is equivalent to the group having the weak containment property. Indeed, amenability is even equivalent to the $\sigma$-weak containment property for all continuous $2$-cocycles $\sigma$ of the group. Moreover, it is easy to see that if a group is $C^*$-unique, then it is amenable. The converse is however not true \cite[p.\ 230]{Boidol84}. In stark contrast to the case of locally compact groups, the following example shows that groupoids can be $C^*$-unique without even being amenable. 
	
	\begin{exmp}[Non-amenable $C^*$-unique groupoid] In \cite[Theorem 2.7]{AlFi} the authors constructed a second-countable, locally compact, Hausdorff non-amenable \'etale groupoid $\G$ such that $\Isoint{\G}=\Go$ and $C^*_r(\G)=C^*(\G)$. Then since $\ell^1(\Isoint{\G})=C_0(\Go)\subseteq\ell^1(\G)$, we have by \cref{prop:twisted-injective-iff-injective-on-subalg} that  every nonzero two-sided ideal $I$ of $C^*(\G)$ has nonzero intersection with $C_0(\Go)$, and hence with $\ell^1(\G)$. Therefore by \cref{prop:C*-uniqueness-intersection-property}  we have that $\ell^1(\G)$ is $C^*$-unique.
		
		In this particular case we may also deduce $C^*$-uniqueness of $\ell^1 (\G)$ in another way. Namely, as $\Isoint{\G}=\Go$, we have that $\Isoint{\G}_x$ is the trivial group for every $x \in \Go$. Hence $\ell^1 (\Isoint{\G}_x)$ is $C^*$-unique by \cref{exmp:untwisted-case}. This argument of course carries over to any topologically principal groupoid. Indeed, this approach shows that whenever $\G$ is a second-countable, locally compact, Hausdorff topologically principal \'etale groupoid, then $\ell^1 (\G,\sigma)$ is $C^*$-unique for any $\sigma \in Z^2 (\G,\T)$. 
	\end{exmp}
	
	We also have classes of examples that includes more general cocycle twists.
	\begin{exmp}[The twisted case]
		Let $\G$ be a second-countable locally compact Hausdorff \'etale groupoid, and let $\sigma\in Z^2 (\G,\T)$. By \cref{thm:main-thm} $C^*$-uniqueness of $\ell^1 (\G,\sigma)$ can be deduced by $C^*$-uniqueness of the Banach $*$-algebras $\ell^1 (\Isoint{\G}_x, \sigma_x)$, for $x \in \Go$, where $\sigma_x$ as before denotes the restriction of $\sigma$ to $\Isoint{\G}_x$. 
		$C^*$-uniqueness of twisted convolution algebras of locally compact groups was studied in \cite{Austad20}. In \cite[Theorem 3.1]{Austad20} it was found that if $G$ is a locally compact group and $c \in Z^2 (G,\T)$, then $L^1 (G,c)$ is $C^*$-unique if $L^1 (G_c)$ is $C^*$-unique, where $G_c$ denotes the Mackey group associated to $G$ and $c$. As a topological space $G_c$ is just $G\times \T$, but the binary operation is given by
		\begin{equation*}
		(x,\tau)\cdot (y,\eta) = (xy, \tau \eta \overline{c(x,y)}).
		\end{equation*}
		Thus we may relate $C^*$-uniqueness of $\ell^1 (\Isoint{\G},\sigma_x)$ to $C^*$-uniqueness of $\ell^1 (\Isoint{\G}_{\sigma_x})$, where $\Isoint{\G}_{\sigma_x}$ denotes the Mackey group associated to $\Isoint{\G}_x$ and $\sigma_x$, and we deduce that $\ell^1 (\G,\sigma)$ is $C^*$-unique if $\ell^1 (\Isoint{\G}_{\sigma_x})$ is $C^*$-unique for every $x \in \Go$. This happens if, for example, $\Isoint{\G}_{\sigma_x}$ is a group of one of the types discussed in \cref{exmp:untwisted-case}.
	\end{exmp}
	
	In the following example we are able to deduce $C^*$-uniqueness of a locally compact group not of the form discussed in \cref{exmp:untwisted-case} by relating the question to $C^*$-uniqueness of a groupoid.
	
	\begin{exmp}[The wreath product]\label{exmp:wreath}
		Let $\Gamma$ denote the wreath product $H\wr G :=\left( \bigoplus_G H\right) \rtimes G$ where $H$ is a finite abelian group and where $G$ is a countable discrete amenable  group. We will show that $\ell^1 (\Gamma)$ is $C^*$-unique. 
		
		To do this, let $\G=X\rtimes_\varphi G$ be the transformation groupoid where  $X=\prod_G \hat{H}$, and $\varphi$ is the shift homeomorphism of $X$ by $G$. $\G$ is amenable since $G$ is amenable. Then we have that 
		$$C^*(\Gamma)\cong C^*(\bigoplus_G H)\rtimes_\varphi G\cong C(X)\rtimes_\varphi G\,.$$
		Now recall that by the Fourier transform $\ell^1(\bigoplus_G H)\cong A(X)$, where $A(X)$ is a dense subalgebra of $C(X)$. Indeed, it becomes a Banach $*$-subalgebra of $C(X)$ when equipped with the induced $\ell^1$-norm through the Fourier transform, and then the isomorphism is also an isometry. It also follows that $C(X)$ is the completion of $\ell^1(\bigoplus_G H)$ with respect to some $C^*$-norm. We have  that $\ell^1(\Gamma)\cong\ell^1(\ell^1\left( \bigoplus_G H\right),G)\cong \ell^1(A(X),G)$ (see for example \cite[Remark and Notation 2.4]{LeuNg}). Then there exists an isometric embedding  $\iota:\ell^1(A(X),G)\hookrightarrow \ell^1(\G)$ defined as follows. If $F\in \ell^1 (A(X),G)$, we define $\iota (F)$ to be
		\begin{equation*}
		\iota(F) (x,g) = \widehat{f_g}(x),
		\end{equation*}
		for $x \in X =\prod_G \hat{H}$ and $g \in G$, where $f_g$ is the unique element of $\ell^1 (\bigoplus_G H)$ with $\widehat{f_g} = F(g)$. Therefore by the isomorphisms $C^*(\ell^1(\Gamma))\cong C^*(\ell^1(A(X),G)) \cong C^*(\ell^1(\G))$ it would be enough to check that any nonzero two-sided ideal $I$ of $C^*(\G)$ has a non-trivial intersection with the image of $\ell^1(A(X),G)$ by the inclusion $\iota$. Observe that then $\ell^1(\bigoplus_G H)\subseteq \ell^1(\Gamma)$  can be identified with $\iota(A(X))$  in $C(X)\subseteq C^*(\G)$. The groupoid $\G$ is clearly topologically principal, and hence $\ell^1(\G)$ is $C^*$-unique. Moreover, for every  closed two-sided ideal $\{0\} \neq I\unlhd C^*(\G)$ we have that $\{0\} \neq J:=I\cap C(X)$ \cite[Theorem 4.1]{KaTo90}. But since $\bigoplus_G H$ is locally finite, then $\ell^1(\bigoplus_G H)$, and hence $A(X)$, are $C^*$-unique by \cite{GrMuRo}. Thus, $J\cap A(X)\neq \{0\}$, which further implies $J \cap \ell^1(A(X),G) \neq \{0\}$. It follows that $\ell^1 (\Gamma)$ is $C^*$-unique. 
	\end{exmp}

	\section*{Acknowledgements}
	The first author wishes to thank Petter Nyland for valuable discussions during the development of this article.

\end{document}